\documentclass[leqno,11pt]{amsart}

\usepackage{
  amsmath,
  amssymb,
  mathtools,
  enumitem,
  tikz,
  stmaryrd
}

\usepackage{array}
\usepackage{placeins}

\usepackage[
  bookmarksnumbered,
  hidelinks,
  pdftitle={Combinatorial and algebraic proofs of Keller's A2 square A2 q-dilogarithm identity},
  pdfauthor={Richard Rimanyi}
]{hyperref}

\usepackage{geometry}
\newcommand{\E}{\mathbb{E}}
\newcommand{\C}{\mathbb{C}}
\newcommand{\N}{\mathbb{N}}
\newcommand{\Z}{\mathbb{Z}}
\newcommand{\cB}{\mathcal{B}}
\newcommand{\cP}{\mathcal{P}}
\newcommand{\Part}{\mathsf{P}}
\newcommand{\Dist}{\mathsf{D}}
\newcommand{\Rect}{\mathsf{R}}

\newcommand{\wt}{\operatorname{wt}}

\newcommand{\kp}{\vdash}
\newcommand{\blank}{\varnothing}

\DeclareMathOperator{\Hilb}{Hilb}
\DeclareMathOperator{\DBZ}{DBZ}

\theoremstyle{plain}
\newtheorem{thm}{Theorem}[section]
\newtheorem{prop}[thm]{Proposition}
\newtheorem{lem}[thm]{Lemma}
\newtheorem{cor}[thm]{Corollary}

\theoremstyle{definition}

\theoremstyle{remark}
\newtheorem{remark}[thm]{Remark}
\newtheorem{example}[thm]{Example}

\title[Proofs of Keller's $q$-dilogarithm  identity]
{Combinatorial and algebraic proofs of Keller's  
\texorpdfstring{$A_2\square A_2$}{A2 square A2} $q$-dilogarithm identity}

\author{R. Rim\'anyi}
\address{Department of Mathematics, University of North Carolina at Chapel Hill,
Chapel Hill, NC}
\email{rimanyi@email.unc.edu}

\subjclass[2020]{05A17, 05A19, 13F60, 14E18}
\keywords{Quantum dilogarithm, partition identity, Durfee square,
standard monomial, sign-reversing involution, Algorithm Z}

\begin{document}

\begin{abstract}
The classical Durfee-square argument gives a combinatorial proof of the
pentagon identity for the quantum dilogarithm. Just as the pentagon
identity is associated with the $A_2$ quiver, Keller's identity is
associated with the square-product quiver $A_2\square A_2$. Previous
proofs of Keller's identity use cluster categories or spectral sequences in rapid-decay equivariant cohomology. We give three proofs of Keller's identity: a generating-function proof, an explicit weight-preserving bijection on
colored partitions, and a standard-monomial proof using a four-colored
arc algebra. Their common mechanism is an iterated Durfee decomposition:
two possible pairings give horizontal and vertical decompositions, while
a third binary step accounts for the coupling factor. These constructions
provide a ``superpotential analogue'' of the Durfee-square argument.
\end{abstract}

\maketitle


\section{Introduction}
\label{s:intro}

The quantum dilogarithm formal power series
\[
  \E(z)=
  \sum_{n\geq 0} \frac{(-z)^n q^{n^2/2}}{(1-q)(1-q^2)\cdots(1-q^n)}
\]
satisfies remarkable identities. The ones discussed in the present paper come on two levels: some are associated to Dynkin quivers, and some are associated to {\em pairs} of Dynkin quivers. 
These identities occur in various parts of mathematics; a non-exhaustive list is given in Figure~\ref{fig:fig}. This table already displays the goal of our paper (see the question mark): finding the combinatorial interpretation of Keller's $A_2\square A_2$ identity for quantum dilogarithms. 

The combinatorial interpretation of the $A_2$ identity (a.k.a. the pentagon identity) is the beautiful {\em Durfee squares} argument of classical combinatorics \cite{SylvesterFranklin1882}. Hence, ever since \cite{Rimanyi2013, AllmanRimanyi2018} the author has been interested in finding the {\em superpotential analogue}, or {\em rapid-decay analogue} of this classical combinatorial argument, that is, a combinatorial argument that proves Keller's $A_2\square A_2$ identity. The paper \cite{RimanyiWeigandtYong2018} grew out of this interest, but instead went in different directions.

In this paper we give three complementary combinatorial and algebraic proofs of Keller's $A_2\square A_2$ identity. In Sections~\ref{s:Keller-statement}--\ref{s:alternating-path} we give a generating-function proof and construct an explicit weight-preserving bijection between the two families of colored-partition tuples indexing the coefficient expansions. In Section~\ref{s:standard-monomials} we introduce a four-colored arc algebra; two iterated standard-monomial constructions give bases of the same algebra and yield the third proof. Our proofs follow the blueprint of the topological proofs; in fact, the arc algebra explicitly displays the superpotential. 

\begin{figure}
\begin{center}
{\renewcommand{\arraystretch}{1.7}
    \begin{tabular}{ >{\centering\arraybackslash}m{5cm} >{\centering\arraybackslash}m{1cm} >{\centering\arraybackslash}m{7cm} } 
    Dynkin quiver, e.g., $A_2$ & $\longleftrightarrow$ & pair of Dynkin quivers, e.g., $A_2\square A_2$
   \\
   pentagon identity for $\E$ & $\longleftrightarrow$ & Keller's identity for $\E$
   \\
   cluster algebra & $\longleftrightarrow$ & cluster category
   \\
   equivariant cohomology spectral sequence &  $\longleftrightarrow$ &  rapid decay equivariant cohomology spectral sequence 
   \\
   cohomological Hall algebra & $\longleftrightarrow$ & cohomological Hall algebra with superpotential
   \\
   counting partitions using Durfee squares & $\longleftrightarrow$ & ?
    \end{tabular}}
\end{center}
\caption{Two levels of quantum dilogarithm identities}\label{fig:fig}
\end{figure}

\FloatBarrier

\subsection{The identities; the coupling phenomenon} Cf. \cite[\S1]{AllmanRimanyi2018}.
The $\E$-series satisfies the celebrated pentagon identity, or $A_2$ identity,
\[
\E(y_1)\E(y_2)
  =
\E(y_2)\E(y_{12})\E(y_1)
\]
for certain $q$-commuting variables $y_1, y_2, y_{12}$. After unfolding the $q$-commutativity this boils down to the explicit
\[
\cP_{\gamma_1}\cP_{\gamma_2}=
\sum_{(r,s,k) \kp (\gamma_1,\gamma_2)}
q^{rs}
\cP_{r}\cP_{s}\cP_{k}
\qquad
(\forall \gamma_1,\gamma_2\in \N),
\]
where $\cP_n=1/\prod_{i=1}^n(1-q^i)$ and see \eqref{eq:kp} for the meaning of $\kp$.
Formal manipulations on this identity yield 
\begin{equation}
\label{eqn:55term}
\sum_{\substack{(r,s,k)\kp(\gamma_1,\gamma_2)\\
                (u,v,l)\kp(\gamma_3,\gamma_4)}}
q^{rs+uv}\cP_r\cP_s\cP_k\cP_u\cP_v\cP_l
=
\sum_{\substack{(r,s,k)\kp(\gamma_1,\gamma_3)\\
                (u,v,l)\kp(\gamma_2,\gamma_4)}}
q^{rs+uv}
\cP_r\cP_s\cP_k\cP_u\cP_v\cP_l.
\end{equation}
for all $\gamma_1,\gamma_2,\gamma_3,\gamma_4$. In fact both sides are equal to $\cP_{\gamma_1}\cP_{\gamma_2}\cP_{\gamma_3}\cP_{\gamma_4}$. The novelty of Keller's $A_2 \square A_2$ identity is that in \eqref{eqn:55term} one can insert an {\em extra coupling factor} $q^{kl}$ in each term, namely
\begin{equation}
\label{eqn:55termKeller}
\sum_{\substack{(r,s,k)\kp(\gamma_1,\gamma_2)\\
                (u,v,l)\kp(\gamma_3,\gamma_4)}}
q^{rs+uv+kl}\cP_r\cP_s\cP_k\cP_u\cP_v\cP_l
=
\sum_{\substack{(r,s,k)\kp(\gamma_1,\gamma_3)\\
                (u,v,l)\kp(\gamma_2,\gamma_4)}}
q^{rs+uv+kl}\cP_r\cP_s\cP_k\cP_u\cP_v\cP_l.
\end{equation}
In fact this coupled identity \eqref{eqn:55termKeller}, not the uncoupled \eqref{eqn:55term}, is the one that translates to an identity of quantum dilogarithm series:
\[
\E(y_2)\E(y_3)\E(y_{12})\E(y_{34})\E(y_1)\E(y_4)
	= \E(y_1)\E(y_4)\E(y_{13})\E(y_{24})\E(y_2)\E(y_3),
\]
for certain $q$-commuting $y$-variables. The combinatorics of the $q$-commutativity is dictated by the $A_2\square A_2$ quiver, see \eqref{eq:A2A2}.

The common mechanism behind the three proofs is a four-color, iterated
version of the Durfee decomposition. At each of the three
levels---generating series, signed sets, and standard monomials---the
horizontal pairings $(1,2),(3,4)$ and the vertical pairings
$(1,3),(2,4)$ give two decompositions of the same object. In either
decomposition, a third binary step couples the two intermediate colors;
its Durfee shift is precisely the factor $q^{kl}$ in
\eqref{eqn:55termKeller}. In the standard-monomial realization, the new
content is this four-color refinement, the two iterated normal forms,
and the identification of the third-node shift with Keller's coupling
factor.

\subsection{Structure of the paper}
The paper is organized as follows.

After Section~\ref{s:notation} fixes conventions and notation,
Sections~\ref{s:pentagon}--\ref{s:pentagon-algebra} recall facts and
proofs concerning the pentagon identity.

Section~\ref{s:uncoupled} discusses the uncoupled four-vertex identity,
an innocent generalization of the pentagon identity, as preparation for
the next main part.

Sections~\ref{s:Keller-statement}--\ref{s:alternating-path} give two
proofs of Keller's $A_2 \square A_2$ identity for $\E$, one using
generating functions and one using partition combinatorics.

Finally, Section~\ref{s:standard-monomials} gives a third proof of
Keller's identity by counting elements in a standard-monomial basis of
an arc algebra.

\subsection{Acknowledgements}
The author was supported by the U.S. National Science Foundation
under Grant No. 2152309. Any opinions, findings, and conclusions or recommendations expressed in this
material are those of the author(s) and do not necessarily reflect the views of the NSF.

\section{Notation and conventions}
\label{s:notation}

\noindent
We use the convention $\N=\{0,1,2,\ldots\}$.

\subsection{Quantum numbers, quantum dilogarithm}
In this paper we will consider formal power series in the variable $q^{1/2}$. Define 
\[
  (z;q)_n=\prod_{i=0}^{n-1}(1-zq^i),\qquad
  (z;q)_\infty=\prod_{i\geq0}(1-zq^i),
\]
and abbreviate $(q;q)_n$ to $(q)_n$. The $q$-binomial theorem in the
form used below is
\begin{equation}
\label{eq:q-binomial}
  \sum_{n\geq0}\frac{(z;q)_n}{(q)_n}X^n
  =
  \frac{(zX;q)_\infty}{(X;q)_\infty}.
\end{equation}
When we write a rational function in $q$, we mean its formal power-series
expansion at $q=0$. For example, define 
\[
  \cP_n=\frac{1}{(q)_n}
  =\frac{1}{(1-q)(1-q^2)\cdots(1-q^n)},\qquad \cP_0=1.
\]
We will also use the quantum dilogarithm series (with the convention from \cite{AllmanRimanyi2018}):
\[
  \E(z)=\sum_{r\geq 0} (-z)^rq^{r^2/2}\cP_r=
  \sum_{r\geq 0} \frac{(-z)^r q^{r^2/2}}{(1-q)(1-q^2)\cdots(1-q^r)}
  =
  (q^{1/2}z;q)_{\infty},
\]
which is a formal power series in the variable $z$, with coefficients in $\Z\llbracket q^{1/2} \rrbracket$.

We write
\begin{equation}\label{eq:kp}
  (r,s,k)\kp(a,b)
  \quad\Longleftrightarrow\quad
  r+k=a,\quad s+k=b,
\end{equation}
where $a,b,r,s,k$ are non-negative integers.

\subsection{Partition generating sequences, colored partitions}
Let 
\[
\Part_r=\{\text{weakly decreasing $r$-tuples of non-negative integers}\},
\]
 \[
\Dist_r=\{\text{strictly decreasing $r$-tuples of non-negative integers}\}.
\]
Defining $|\ |$ to be the sum of components, we have
\[
\sum_{\lambda\in\Part_r} q^{|\lambda|}=\cP_r, \qquad 
\sum_{\delta\in\Dist_r} q^{|\delta|} = \frac{q^{\binom{r}{2}}}{(q)_r}.
\]
We will decorate partitions with {\em colors}, as follows. 
Colors will be monomials in commuting variables $x_1,x_2,x_3,x_4$.   If $\lambda\in\Part_r$ has color $A$, define its weight 
\[
  \wt(\lambda^A)=A^r q^{|\lambda|}.
\]
In notation we occasionally use obvious abbreviation of colors, say, $12$ abbreviates the color $x_1x_2$ and $1234$ abbreviates
$x_1x_2x_3x_4$.

Write $\Part(A)=\bigsqcup_{r\geq0}\Part_r(A)$ for all ordinary
$A$-colored partitions. Write $\Dist(A)$ for the same construction with distinct parts. For $\lambda \in \Part_r(A)$ (resp. $\delta \in \Dist_r(A)$) we write $\ell(\lambda)=r$ (resp. $\ell(\delta)=r$). We make $\Dist(A)$ into a `signed' set by defining
$\operatorname{sgn}(\delta^A)=(-1)^{\ell(\delta)}$.
Consequently, these classes model the Pochhammer factors underlying
$\E$ and $\E^{-1}$.  More precisely, after the substitution
$A=q^{1/2}z$, the signed set $\Dist(A)$ models $\E(z)$ and
$\Part(A)$ models $\E(z)^{-1}$.  Explicitly,
\begin{equation}
\label{eq:colored-gen}
  \sum_{\lambda\in\Part(A)}\wt(\lambda)
   =\frac{1}{(A;q)_\infty},
  \qquad
  \sum_{\delta\in\Dist(A)}
       \operatorname{sgn}(\delta)\wt(\delta)
   =(A;q)_\infty.
\end{equation}



\section{Pentagon identity in quantum dilogarithm and coefficient forms}
\label{s:pentagon}
Consider $q$-commuting variables $y_1,y_2$, that is, we assume that $y_2y_1=qy_1y_2$. Define 
\[
y_{12}=-q^{1/2}y_1y_2.
\]
The pentagon identity for quantum dilogarithms is the identity
\begin{equation}
\label{eq:E-pentagon}
  \E(y_1)\E(y_2)
  =
  \E(y_2)\E(y_{12})\E(y_1).
\end{equation}
After rewriting both sides in the ordered monomial basis, for the coefficients of 
$y_1^ay_2^b$ on the two sides we obtain the identity 
\begin{equation}
\label{eq:pentagon}
  \cP_a\cP_b
  =
  \sum_{(r,s,k)\kp(a,b)}
  q^{rs}\cP_r\cP_s\cP_k.
\end{equation}
That is, the $\E$-identity \eqref{eq:E-pentagon} packages the infinitely many identities \eqref{eq:pentagon} (one for each $a,b$-pair) in a compact form.  

Identity \eqref{eq:E-pentagon} is the $A_2$ case of the quantum
dilogarithm identities for Dynkin quivers
\cite{Reineke2010,Rimanyi2013}.  It also has interpretations in
Donaldson--Thomas theory and wall crossing; see, for example,
\cite{KontsevichSoibelman2014,Keller2011}, and many other parts of mathematics. Our concern here is its
partition-combinatorial interpretation.

\section{Pentagon identity and Durfee rectangle arguments}
\label{s:durfee}

\subsection{The classical Durfee rectangle identity by stabilization}

Fix an integer $c\geq0$.  Specializing \eqref{eq:pentagon} to
$a=N$, $b=N+c$, and setting $d=N-k$, we obtain
\begin{equation}
\label{eq:finite-rectangle}
  \cP_N\cP_{N+c}
  =
  \sum_{d=0}^N
  q^{d(d+c)}\cP_d\cP_{d+c}\cP_{N-d},
\end{equation}
equivalently, 
\[
  \cP_{N+c}
  =
  \sum_{d=0}^N
  q^{d(d+c)}\cP_d\cP_{d+c}
  \frac{\cP_{N-d}}{\cP_N}.
\]
For fixed $c,d$ and every fixed power of $q$, the quotient
$\cP_{N-d}/\cP_N$ stabilizes to $1$ as $N\to\infty$, while
$\cP_{N+c}$ stabilizes to $1/(q;q)_\infty$.  Since only finitely many
$d$ contribute to any fixed power of $q$, we obtain
\begin{equation}
\label{eq:classical-Durfee-rectangle}
  \frac{1}{(q;q)_\infty}
  =
  \sum_{d\geq0}q^{d(d+c)}\cP_d\cP_{d+c}.
\end{equation}
This is the Durfee rectangle identity.  The ordinary Durfee square
is the case $c=0$ of this construction.  For fixed $c\geq0$, the
$n$th coefficient of the left-hand side is the number of partitions of
$n$, and the right-hand side counts the same objects according to the
largest possible top-left-aligned rectangle having $d$ rows and $d+c$
columns.  What remains is a partition to its right with at most $d$
rows and a partition below it with at most $d+c$ columns, as in the
figure below.
\[
\begin{tikzpicture}[scale=0.3]
  \draw (0,0) -- (12,0) -- (12,-1) -- (10,-1) -- (10,-2) -- (9,-2) -- (9,-3) --  (8,-3) -- (8,-4) -- (7,-4); 
  \draw (0,-5) -- (0,-8) -- (1,-8) -- (1,-7) -- (3,-7) -- (3,-6) -- (5,-6) -- (5,-5) -- (7,-5); 
  \fill[gray!20] (0,0) rectangle (7,-5);
  \draw[very thick] (0,0) rectangle (7,-5);
  \draw (3.5,-2.5) node {$d \times (d+c)$};
  \draw (15,-5) node[right] {partition with at most $d+c$ columns};
  \draw[thick, blue,->] (14.5,-5) to[out=-160,in=-20]  (2,-6);
  \draw (15,-2) node[right] {partition with at most $d$ rows};
  \draw[thick, blue,->] (14.5,-2) to[out=-160,in=-20]  (8.2,-2);
\end{tikzpicture}
\]

\subsection{A finite two-partition bijection}
We saw that after specialization, division by $\cP_N$, and stabilization, identity \eqref{eq:pentagon} can be interpreted as a bijection between sets of combinatorial objects. However, the same is true for \eqref{eq:pentagon} without specialization, division, and stabilization. 

There is an explicit weight-preserving bijection
\begin{equation}
\label{eq:finite-DBZ-map}
  \DBZ_{a,b}:
  \Part_a\times\Part_b
  \longrightarrow
  \bigsqcup_{(r,s,k)\kp(a,b)}
  \bigl(\Rect_{r,s}\times
        \Part_r\times\Part_s\times\Part_k\bigr),
\end{equation}
which we will call the \emph{finite Durfee--BZ bijection}, where $\Rect_{r,s}$ is the singleton rectangle of weight $q^{rs}$.
Thus the left-hand side of \eqref{eq:pentagon} counts two partitions, while a
right-hand object consists of a rectangle and three partitions. 
The construction is given in Section \ref{s:algorithm-Z}.

\begin{remark}
    Using our conventions for partitions, $\DBZ_{a,b}$ is closely related to a correspondence given in \cite[Prop.~3.1]{BressoudZeilberger1989}. That correspondence is commonly called Algorithm Z; see, for example, \cite{ChenChenFuZang2010}.  The finite Durfee--BZ bijection constructed below is obtained from this local correspondence by the involution principle; it is not itself stated in \cite{BressoudZeilberger1989}. It is an alternating-path bijection between two fixed-point sets; in this sense it is a small, `two-color prototype' of our result in Section~\ref{s:alternating-path}.
\end{remark}

\begin{remark}
When $a=N$ and $b=N+c$, the rectangle in
\eqref{eq:finite-DBZ-map} is $d\times(d+c)$, with $d=N-k$.
The third partition records the finite information that disappears from
\eqref{eq:finite-rectangle} after division and stabilization.  Thus
\eqref{eq:finite-DBZ-map}, rather than
\eqref{eq:classical-Durfee-rectangle}, is the form that can be iterated
without discarding data.
\end{remark}

\section{Algebraic proof of the pentagon identity}
\label{s:pentagon-algebra}

Introduce the two-variable series
\[
  F(X,Y)
  =
  \sum_{r,s\geq0}
  q^{rs}\cP_r\cP_sX^rY^s.
\]

\begin{lem}
\label{lem:F-product}
We have
\[
  F(X,Y)
  =
  \frac{(XY;q)_\infty}
       {(X;q)_\infty(Y;q)_\infty}.
\]
\end{lem}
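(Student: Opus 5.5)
The plan is to sum over one index using the $q$-binomial theorem \eqref{eq:q-binomial}, and then sum the remaining geometric-type series by Euler's identity.

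Fix $r$ and sum first over $s$. The inner sum is
\[
  \sum_{s\ge0}\cP_s\,(q^rY)^s=\frac{1}{(q^rY;q)_\infty},
\]
by \eqref{eq:q-binomial} with $z=0$, or equivalently by the first identity in \eqref{eq:colored-gen}. Next write
\[
  \frac{1}{(q^rY;q)_\infty}=\frac{(Y;q)_r}{(Y;q)_\infty}.
\]
This gives
\[
  F(X,Y)=\frac{1}{(Y;q)_\infty}\sum_{r\ge0}\frac{(Y;q)_r}{(q)_r}X^r .
\]
Now apply \eqref{eq:q-binomial} a second time, with $z=Y$, to the remaining sum. It equals $(XY;q)_\infty/(X;q)_\infty$, which is the claimed formula.

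The only point needing care is the legitimacy of these manipulations as formal power series. We work in $\Z\llbracket q\rrbracket\llbracket X,Y\rrbracket$. For each fixed monomial $X^rY^s$, the coefficient on either side is a single power series in $q$, so interchanging the order of summation is harmless. The identity $1/(q^rY;q)_\infty=(Y;q)_r/(Y;q)_\infty$ is an identity of invertible series in $Y$, since each factor $1-Yq^i$ has constant term $1$.

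I do not expect a genuine obstacle. The step that carries the content is recognizing the $q$-shift $q^{rs}$ as the substitution $Y\mapsto q^rY$, which converts the coupling into the finite Pochhammer factor $(Y;q)_r$ that the $q$-binomial theorem absorbs.

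As a sanity check, and as an alternative combinatorial route matching the Durfee picture, we can specialize to $X=Y$ times suitable shifts and compare with \eqref{eq:pentagon}. Multiplying both sides by $(X;q)_\infty(Y;q)_\infty$ and extracting the coefficient of $X^aY^b$ recovers \eqref{eq:pentagon}, because
\[
  \frac{(XY;q)_\infty}{(X;q)_\infty(Y;q)_\infty}\cdot\frac{1}{(XY;q)_\infty}
\]
expands as $\sum\cP_a\cP_bX^aY^b$. We could also run this argument in reverse, deducing the lemma from \eqref{eq:pentagon}. I would present the direct $q$-binomial computation as the proof.
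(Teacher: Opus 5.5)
Your proposal is correct and is essentially the paper's own proof: sum over $s$ first using \eqref{eq:q-binomial} with $z=0$, rewrite $1/(q^rY;q)_\infty=(Y;q)_r/(Y;q)_\infty$, and apply \eqref{eq:q-binomial} again with $z=Y$. One small slip in your optional closing aside: \eqref{eq:pentagon} is recovered by dividing both sides by $(XY;q)_\infty$, not by multiplying by $(X;q)_\infty(Y;q)_\infty$; this does not affect the proof itself.
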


\begin{proof}
Summing first over $s$ and using \eqref{eq:q-binomial} we have
\begin{equation*}
F(X,Y)
=
\sum_{r\geq0}\frac{X^r}{(q)_r}
  \frac{1}{(Yq^r;q)_\infty}
=
\frac{1}{(Y;q)_\infty}
\sum_{r\geq0}\frac{(Y;q)_r}{(q)_r}X^r
=
\frac{(XY;q)_\infty}
     {(X;q)_\infty(Y;q)_\infty}.
\end{equation*}
\end{proof}

From the lemma we obtain
\[
 \frac{1}{(X;q)_\infty(Y;q)_\infty}
 =
 F(X,Y)\frac{1}{(XY;q)_\infty}.
\]
Observe that the two sides are the generating series of the two sides of \eqref{eq:pentagon} over all $a,b$. Taking the coefficient of
$X^aY^b$ then proves \eqref{eq:pentagon}.



\section{The uncoupled identity}
\label{s:uncoupled}

Fix $\gamma=(\gamma_1,\gamma_2,\gamma_3,\gamma_4)\in\N^4$.  Applying
\eqref{eq:pentagon} independently to the pairs $(\gamma_1,\gamma_2)$ and
$(\gamma_3,\gamma_4)$ gives
\begin{equation}
\label{eq:uncoupled}
\sum_{\substack{(r,s,k)\kp(\gamma_1,\gamma_2)\\
                (u,v,l)\kp(\gamma_3,\gamma_4)}}
q^{rs+uv}\cP_r\cP_s\cP_k\cP_u\cP_v\cP_l
=
\sum_{\substack{(r,s,k)\kp(\gamma_1,\gamma_3)\\
                (u,v,l)\kp(\gamma_2,\gamma_4)}}
q^{rs+uv}\cP_r\cP_s\cP_k\cP_u\cP_v\cP_l.
\end{equation}
We call this the \emph{uncoupled four-vertex identity}.

\subsection{A combinatorial proof}

Let
\begin{equation}\label{eq:4factors}
  \mathcal U_\gamma
  =
  \Part_{\gamma_1}\times\Part_{\gamma_2}
  \times\Part_{\gamma_3}\times\Part_{\gamma_4}.
\end{equation}
Apply the finite Durfee--BZ bijection
\eqref{eq:finite-DBZ-map} to the first two and last two partitions.
This identifies \eqref{eq:4factors} with the objects counted on the left-hand side of \eqref{eq:uncoupled}.  Applying it instead to the first and third, and to the second and fourth, identifies the same set \eqref{eq:4factors} with
the objects counted on the right-hand side.  The composition
\[
  (\DBZ_{\gamma_1,\gamma_3}
   \times\DBZ_{\gamma_2,\gamma_4})
  \circ
  (\DBZ_{\gamma_1,\gamma_2}
   \times\DBZ_{\gamma_3,\gamma_4})^{-1}
\]
is therefore a weight-preserving bijection between the two sides.

\subsection{An algebraic proof}

By \eqref{eq:pentagon}, each side of \eqref{eq:uncoupled} equals
\begin{equation}
\label{eq:uncoupled-common}
  \cP_{\gamma_1}\cP_{\gamma_2}
  \cP_{\gamma_3}\cP_{\gamma_4}.
\end{equation}
Equivalently, summing over $\gamma$, the horizontal expression is
\[
\frac{F(x_1,x_2)}{(x_1x_2;q)_\infty}
\cdot
\frac{F(x_3,x_4)}{(x_3x_4;q)_\infty}
=
\prod_{i=1}^4\frac{1}{(x_i;q)_\infty},
\]
and the vertical factorization gives the same product.

\begin{remark}
The uncoupled identity is deliberately elementary.  The two applications
of the finite Durfee bijection do not interact, and the simple common
expression \eqref{eq:uncoupled-common} remains visible. This identity is presented only as a comparison with the coupled version below. 
\end{remark}



\section{Keller's \texorpdfstring{$A_2\square A_2$}{A2 square A2} identity}
\label{s:Keller-statement}
Consider four variables $y_1, y_2,y_3,y_4$ with commutation relations
\[
y_2y_1=qy_1y_2,
\quad
y_1y_3=qy_3y_1,
\quad
y_3y_4=qy_4y_3,
\quad
y_4y_2=qy_2y_4,
\qquad
y_1y_4=y_4y_1,
\quad
y_2y_3=y_3y_2.
\]
and define 
\[
y_{12}=-q^{1/2}y_1y_2,
\quad
y_{13}=-q^{1/2}y_3y_1,
\quad
y_{34}=-q^{1/2}y_4y_3,
\quad
y_{24}=-q^{1/2}y_2y_4.
\]
These commutation relations define a quantum torus and are read from the
underlying $A_2 \square A_2$ quiver
\begin{equation}\label{eq:A2A2}
\begin{tikzpicture}[->,semithick,auto,inner sep=1mm, baseline=10pt]
\node (1) at (0,1) {$1$};
\node (2) at (1,1) {$2$};
\node (4) at (1,0) {$4$};
\node (3) at (0,0) {$3$};
\path
(1) edge (3)
(3) edge (4)
(4) edge (2)
(2) edge (1);
\end{tikzpicture}
\end{equation}
see more details in \cite{KontsevichSoibelman2014, AllmanRimanyi2018}. In this fashion, the commutation relations of Section~\ref{s:pentagon} are associated to the quiver $1 \longleftarrow 2$.

\begin{thm}[Keller's $A_2 \square A_2$ identity, \cite{Keller2011,Keller2013,AllmanRimanyi2018}]
\label{thm:KellerE}
We have
\begin{equation}
\label{eq:E-Keller}
\E(y_2)\E(y_3)\E(y_{12})\E(y_{34})\E(y_1)\E(y_4)
=
\E(y_1)\E(y_4)\E(y_{13})\E(y_{24})\E(y_2)\E(y_3).
\end{equation}
\end{thm}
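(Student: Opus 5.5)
The plan is to reduce \eqref{eq:E-Keller} to the coefficient identity \eqref{eqn:55termKeller}, and then prove the latter with generating functions using Lemma~\ref{lem:F-product}, in the same way as the algebraic proof of the pentagon identity in Section~\ref{s:pentagon-algebra}.

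\emph{Step 1: reduction to coefficients.} Expand each side of \eqref{eq:E-Keller} with $\E(z)=\sum_n(-z)^nq^{n^2/2}\cP_n$. A term on the left is indexed by exponents $(s,v,k,l,r,u)$ of $(y_2,y_3,y_{12},y_{34},y_1,y_4)$. Substituting $y_{12}=-q^{1/2}y_1y_2$ and $y_{34}=-q^{1/2}y_4y_3$, the term is a monomial word in the $y_i$ with total degrees
\[
\gamma_1=r+k,\quad \gamma_2=s+k,\quad \gamma_3=u+l,\quad \gamma_4=v+l,
\]
which are exactly the conditions $(r,s,k)\kp(\gamma_1,\gamma_2)$ and $(u,v,l)\kp(\gamma_3,\gamma_4)$. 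Choose a fixed ordered basis of the quantum torus, say the Weyl-symmetrized monomials $Y^\gamma$, for which $y_1^{a}y_2^{b}=q^{\langle\cdot,\cdot\rangle/2}Y^{(a,b)}$ and so on. Rewrite each word as a power of $q^{1/2}$ times $Y^\gamma$, using the commutation form read off the quiver \eqref{eq:A2A2}.

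The claim to verify is that the total exponent equals
\[
\tfrac12\bigl(s^2+v^2+k^2+l^2+r^2+u^2\bigr)+\tfrac12(k+l)+(\text{commutation terms}),
\]
and that this exponent simplifies to $rs+uv+kl+c(\gamma)$, where $c(\gamma)$ depends only on $\gamma$. The signs contribute $(-1)^{\sum\gamma_i}$ overall. The cross term between $y_{12}$ and $y_{34}$ should produce $kl$: because $1\to3$ and $4\to2$ are arrows, the pairing $\langle e_1+e_2,\,e_3+e_4\rangle$ is nonzero. Doing the same on the right, with the vertical pairs $(1,3),(2,4)$, should give $rs+uv+kl+c(\gamma)$ with the same $c(\gamma)$. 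Hence \eqref{eq:E-Keller} is equivalent to \eqref{eqn:55termKeller} for all $\gamma$.

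\emph{Step 2: generating functions.} Multiply the left side of \eqref{eqn:55termKeller} by $x^\gamma$ and sum over $\gamma$. The sum factors as
\[
F(x_1,x_2)\,F(x_3,x_4)\,F(x_1x_2,\,x_3x_4),
\]
since $r,s,u,v,k,l$ range freely and $k$ carries $x_1x_2$ while $l$ carries $x_3x_4$. Applying Lemma~\ref{lem:F-product} three times, the factors $(x_1x_2;q)_\infty$ and $(x_3x_4;q)_\infty$ cancel, leaving
\[
\frac{(x_1x_2x_3x_4;q)_\infty}{\prod_{i=1}^4(x_i;q)_\infty}.
\]
The right side gives $F(x_1,x_3)F(x_2,x_4)F(x_1x_3,x_2x_4)$, which by the same computation equals the same expression. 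That expression is symmetric under $x_2\leftrightarrow x_3$, so comparing coefficients of $x^\gamma$ proves \eqref{eqn:55termKeller}.

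\emph{Main obstacle.} Step 2 is routine. The main difficulty is the bookkeeping in Step 1: checking that the reordering exponents on each side collapse to $rs+uv+kl$ plus a common $\gamma$-dependent term.

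I would handle this by computing the antisymmetric form $\langle\alpha,\beta\rangle$ from the quiver once. Then I would use the fact that, in symmetrized monomials, $Y^\alpha Y^\beta=q^{\langle\alpha,\beta\rangle/2}Y^{\alpha+\beta}$. This turns the ordered product of six monomials into a sum of pairwise brackets of the classes $se_2,\;ve_3,\;k(e_1+e_2),\;l(e_3+e_4),\;re_1,\;ue_4$. The remaining work is to combine these brackets with the quadratic terms $\tfrac12 n^2$ and check that the result is $rs+uv+kl$ plus a quadratic form in $\gamma$. As a sanity check, the uncoupled pieces should reproduce the pentagon bookkeeping of Section~\ref{s:pentagon}, and the single extra bracket $\langle e_1+e_2,\,e_3+e_4\rangle$ together with the squares should supply the coupling factor $q^{kl}$.
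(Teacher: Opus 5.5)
Your proposal is correct and follows the paper's own route. The paper likewise reduces \eqref{eq:E-Keller} to \eqref{eq:Keller} by reordering in the quantum torus: it uses the ordered monomials $y_1^{\gamma_1}y_2^{\gamma_2}y_3^{\gamma_3}y_4^{\gamma_4}$ with common prefactor $(-1)^{\sum_i\gamma_i}q^{c(\gamma)}$, where $c(\gamma)=\tfrac12\sum_i\gamma_i^2-\gamma_1\gamma_3-\gamma_3\gamma_4$, rather than symmetrized monomials. It then proves \eqref{eq:Keller} exactly as in your Step~2, by three applications of Lemma~\ref{lem:F-product} (Section~\ref{s:Keller-algebra}).

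One correction to your heuristic for Step~1. The arrows $1\to3$ and $4\to2$ cross between $\{1,2\}$ and $\{3,4\}$ in opposite directions, so in fact $\langle e_1+e_2,e_3+e_4\rangle=0$. The coupling $kl$ comes instead from the squares, via $\tfrac12(k^2+l^2)=kl+\tfrac12(k-l)^2$: half the sum of all pairwise brackets equals $-\tfrac12(k-l)^2+\tfrac12(\gamma_1-\gamma_4)(\gamma_2-\gamma_3)$. The exponent of $q$ in front of $Y^\gamma$ is therefore $rs+uv+kl+\tfrac12(\gamma_1-\gamma_2)^2+\tfrac12(\gamma_3-\gamma_4)^2+\tfrac12(\gamma_1-\gamma_4)(\gamma_2-\gamma_3)$, and the vertical side gives the same $\gamma$-dependent term. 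Also, since $v$ is your exponent of $y_3$, you should have $\gamma_3=v+l$ and $\gamma_4=u+l$; this slip is harmless because $rs+uv$ is symmetric in $u,v$.
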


Using the notations 
\[
\begin{split}
(r,s,k;u,v,l)\kp_H\gamma
&\Longleftrightarrow
r+k=\gamma_1,\quad s+k=\gamma_2,\quad
u+l=\gamma_3,\quad v+l=\gamma_4,\\
(r,s,k;u,v,l)\kp_V\gamma
&\Longleftrightarrow
r+k=\gamma_1,\quad s+k=\gamma_3,\quad
u+l=\gamma_2,\quad v+l=\gamma_4,
\end{split}
\]
(here $H$ and $V$ refer to `horizontal' and `vertical').
We briefly explain the coefficient extraction in~\eqref{eq:E-Keller}.  Order monomials as
$y_1^{\gamma_1}y_2^{\gamma_2}y_3^{\gamma_3}y_4^{\gamma_4}$ and put
\[
  c(\gamma)
  =
  \frac12\sum_{i=1}^4\gamma_i^2
  -\gamma_1\gamma_3-\gamma_3\gamma_4.
\]
For example, using the quantum-torus relations,
\[
  \frac{(-y_{12})^kq^{k^2/2}}{(q)_k}
  =
  q^{k^2}\cP_k\,y_1^ky_2^k,
  \qquad
  \frac{(-y_{34})^lq^{l^2/2}}{(q)_l}
  =
  q^{l^2}\cP_l\,y_4^ly_3^l.
\]
After the remaining factors are put in the chosen order, the summand
indexed by $(r,s,k;u,v,l)\kp_H\gamma$ is
\[
  (-1)^{\gamma_1+\gamma_2+\gamma_3+\gamma_4}
  q^{c(\gamma)}
  q^{rs+uv+kl}
  \cP_r\cP_s\cP_k\cP_u\cP_v\cP_l\,
  y_1^{\gamma_1}y_2^{\gamma_2}
  y_3^{\gamma_3}y_4^{\gamma_4}.
\]
The same reordering on the right-hand side, now for
$(r,s,k;u,v,l)\kp_V\gamma$, gives the same prefactor
$(-1)^{\sum_i\gamma_i}q^{c(\gamma)}$ and again leaves
$q^{rs+uv+kl}$.  Cancelling this common prefactor gives the following
theorem.

\begin{thm}[Keller's coefficient identity]
\label{thm:Keller-coefficient}
For every $\gamma\in\N^4$,
\begin{equation}
\label{eq:Keller}
\sum_{(r,s,k;u,v,l)\kp_H\gamma}
q^{rs+uv+kl}
\cP_r\cP_s\cP_k\cP_u\cP_v\cP_l
=
\sum_{(r,s,k;u,v,l)\kp_V\gamma}
q^{rs+uv+kl}
\cP_r\cP_s\cP_k\cP_u\cP_v\cP_l.
\end{equation}
\end{thm}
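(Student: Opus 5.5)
The plan is a generating-function argument that extends the algebraic proof of the pentagon identity in Section~\ref{s:pentagon-algebra}: I will show that the two sides of \eqref{eq:Keller}, summed over all $\gamma\in\N^4$ against $x_1^{\gamma_1}x_2^{\gamma_2}x_3^{\gamma_3}x_4^{\gamma_4}$, give the same series in $\Z\llbracket q\rrbracket\llbracket x_1,x_2,x_3,x_4\rrbracket$. Comparing coefficients of $x^\gamma$ then gives the theorem.

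\textbf{Step 1: the horizontal series.} Consider
\[
H=\sum_{\gamma}\sum_{(r,s,k;u,v,l)\kp_H\gamma}q^{rs+uv+kl}\cP_r\cP_s\cP_k\cP_u\cP_v\cP_l\,x^\gamma .
\]
Summing over $\gamma$ simply frees the six indices $r,s,k,u,v,l\in\N$. The monomial attached to an index tuple is
\[
x_1^{r+k}x_2^{s+k}x_3^{u+l}x_4^{v+l}=x_1^rx_2^s\cdot x_3^ux_4^v\cdot(x_1x_2)^k(x_3x_4)^l .
\]
The $q$-exponent $rs+uv+kl$ splits the same way. Hence the sum factors as
\[
H=F(x_1,x_2)\,F(x_3,x_4)\,F(x_1x_2,\,x_3x_4).
\]
Here the coupling factor $q^{kl}$ is exactly what makes the third factor a copy of $F$, rather than $1/\bigl((x_1x_2;q)_\infty(x_3x_4;q)_\infty\bigr)$ as in the uncoupled case.

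\textbf{Step 2: apply Lemma~\ref{lem:F-product} three times.} This gives
\[
H=\frac{(x_1x_2;q)_\infty}{(x_1;q)_\infty(x_2;q)_\infty}\cdot
\frac{(x_3x_4;q)_\infty}{(x_3;q)_\infty(x_4;q)_\infty}\cdot
\frac{(x_1x_2x_3x_4;q)_\infty}{(x_1x_2;q)_\infty(x_3x_4;q)_\infty}
=\frac{(x_1x_2x_3x_4;q)_\infty}{\prod_{i=1}^4(x_i;q)_\infty}.
\]
The intermediate Pochhammer factors telescope away.

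\textbf{Step 3: the vertical series.} The vertical series $V$ is obtained from $H$ by interchanging the roles of $x_2$ and $x_3$. The same computation therefore yields
\[
V=F(x_1,x_3)\,F(x_2,x_4)\,F(x_1x_3,\,x_2x_4),
\]
and this equals the same closed form $(x_1x_2x_3x_4;q)_\infty/\prod_i(x_i;q)_\infty$, which is symmetric in $x_2\leftrightarrow x_3$. Thus $H=V$, and extracting the coefficient of $x^\gamma$ proves \eqref{eq:Keller}.

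\textbf{Main point to check.} There is no real obstacle; the only thing to verify is that the manipulations are legitimate formal identities. For fixed $\gamma$ only finitely many index tuples contribute to $x^\gamma$, so regrouping sums is fine. Lemma~\ref{lem:F-product} is an identity of formal power series in $X,Y$ over $\Z\llbracket q\rrbracket$, so it remains valid under the substitutions $X=x_1x_2$, $Y=x_3x_4$, which send $X$ and $Y$ to monomials of positive degree.
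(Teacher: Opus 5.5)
Your proof is correct and is essentially the paper's generating-function proof in Section~\ref{s:Keller-algebra}: summed over $\gamma$, the two sides factor as $F(x_1,x_2)F(x_3,x_4)F(x_1x_2,x_3x_4)$ and $F(x_1,x_3)F(x_2,x_4)F(x_1x_3,x_2x_4)$. Three applications of Lemma~\ref{lem:F-product} then telescope each product to $(x_1x_2x_3x_4;q)_\infty/\prod_{i=1}^4(x_i;q)_\infty$, and your formal-validity remarks are sound (the paper also gives a bijective alternating-path proof and a standard-monomial proof, but the algebraic one is exactly yours).
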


Define $\mathcal H_\gamma$ to be the disjoint union, over
$(r,s,k;u,v,l)\kp_H\gamma$, of
\[
  \Rect_{r,s}\times\Rect_{u,v}\times\Rect_{k,l}
  \times
  \Part_r\times\Part_s\times\Part_k
  \times\Part_u\times\Part_v\times\Part_l.
\]
Define $\mathcal V_\gamma$ by the same formula with
$\kp_H$ replaced by $\kp_V$.  The weight is the product of the rectangle
and partition weights.  Thus the two sides of \eqref{eq:Keller} are the
generating functions of $\mathcal H_\gamma$ and $\mathcal V_\gamma$, see 
see
Figure~\ref{fig:iterated-durfee}.

\begin{figure}
\begin{center}
\begin{tikzpicture}[scale=0.3]
  \draw[fill=blue!4, draw=blue!100, line width=.9pt, rounded corners=6pt]
  (-2,3) rectangle (16,-8.5);
  \draw[fill=blue!4, draw=blue!100, line width=.9pt, rounded corners=6pt] (-2,-10) rectangle (15,-21);
  \fill[blue!4, rounded corners=6pt] (23,-1) rectangle (44,-15);
  \draw (-1.5,1.6) node[right] {\small node $(3,4)$};
  \draw (-1.5,-11.4) node[right] {\small node $(1,2)$};
  \draw (43.5,-2.5) node[left] {\small node $(12,34)$};
  \fill[gray!20] (0,0) rectangle (5,-4);
  \draw (5,0) -- (9,0) -- (9,-1) -- (7,-1) -- (7,-2) -- (6,-2) -- (6,-3) -- (5,-3);
  \draw (0,-4) -- (0,-7) -- (1,-7) -- (1,-6) -- (3,-6) -- (3,-5) -- (5,-5) -- (5,-4);
  \draw[very thick] (0,0) rectangle (5,-4);
  \draw (2.5,-2) node {$u\times v$};
  \draw (9.5,-1) node[right] {$\Part_u(3)$};
  \draw (3.5,-6.7) node[right] {$\Part_v(4)$};
  \fill[gray!20] (0,-13) rectangle (5,-17);
  \draw (5,-13) -- (8,-13) -- (8,-14) -- (7,-14) -- (7,-15) -- (5,-15);
  \draw (0,-17) -- (0,-20) -- (2,-20) -- (2,-19) -- (3,-19) -- (3,-18) -- (5,-18) -- (5,-17);
  \draw[very thick] (0,-13) rectangle (5,-17);
  \draw (2.5,-15) node {$r\times s$};
  \draw (8.5,-13.8) node[right] {$\Part_r(1)$};
  \draw (3.5,-19.5) node[right] {$\Part_s(2)$};
  \fill[gray!20] (27,-6) rectangle (32,-10);
  \draw (32,-6) -- (36,-6) -- (36,-7) -- (34,-7) -- (34,-8) -- (32,-8);
  \draw (27,-10) -- (27,-13) -- (28,-13) -- (28,-12) -- (30,-12) -- (30,-11) -- (32,-11) -- (32,-10);
  \draw[very thick] (27,-6) rectangle (32,-10);
  \draw (36.5,-6.3) node[right] {$\Part_k(12)$};
  \draw (30,-13) node[right] {$\Part_l(34)$};
  \draw (25.7,-6) -- (26.3,-6);
  \draw (26,-6) -- (26,-10);
  \draw (25.7,-10) -- (26.3,-10);
  \draw (25.8,-8) node[left] {$k$};
  \draw (27,-4.2) -- (27,-4.8);
  \draw (27,-4.5) -- (32,-4.5);
  \draw (32,-4.2) -- (32,-4.8);
  \draw (29.5,-4.5) node[above] {$l$};
  \draw[thick, blue,->] (16,-3) to[out=35,in=110] node[ above, pos=0.5] {\small $l=\gamma_3-u=\gamma_4-v$} (29.3,-2.7);
  \draw[thick, blue,->] (15,-15) to[out=55,in=-190] node[sloped, above, pos=0.5] {\small $k=\gamma_1-r$} node[sloped, below, pos=0.5] {\small $\ =\gamma_2-s$}(24.4,-8.1);
  \draw[red] (28,-18) node[right] {coupling factor $q^{kl}$};
  \draw[thick, red,->] (35,-17) to[out=80,in=-10] (30,-8); 
\end{tikzpicture}
\end{center}
\caption{The horizontal iterated Durfee decomposition. 
The three rectangles contribute $q^{rs+uv+kl}$. 
The vertical decomposition is the analogous 
picture with the pairings $(1,3)$, $(2,4)$, $(13,24)$.
}
\label{fig:iterated-durfee}
\end{figure}

\begin{thm}[Main combinatorial theorem]
\label{thm:main-bijection}
For every $\gamma\in\N^4$, there is an explicit weight-preserving bijection
\[
  \Phi_\gamma:\mathcal H_\gamma\longrightarrow\mathcal V_\gamma.
\]
It is obtained by following alternating paths of the involutions $I_H$ and
$I_V$ on the common signed set of Section~\ref{s:common-set}.
\end{thm}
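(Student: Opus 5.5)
The plan is to realize both $\mathcal H_\gamma$ and $\mathcal V_\gamma$ as fixed-point sets of sign-reversing involutions on one common signed set $\mathcal S_\gamma$, and then to apply the Garsia--Milne involution principle. First, the generating-function identity to be modeled. By Lemma~\ref{lem:F-product}, summing the horizontal side of \eqref{eq:Keller} over $\gamma$ with colors $x_1,\dots,x_4$ gives $F(x_1,x_2)F(x_3,x_4)F(x_1x_2,x_3x_4)$. The third factor is the coupling: $q^{kl}\cP_k\cP_l$ is summed with colors $x_1x_2$ and $x_3x_4$. This product equals
\[
\frac{(x_1x_2;q)_\infty(x_3x_4;q)_\infty(x_1x_2x_3x_4;q)_\infty}
{\prod_i (x_i;q)_\infty\,(x_1x_2;q)_\infty(x_3x_4;q)_\infty}
=\frac{(x_1x_2x_3x_4;q)_\infty}{\prod_{i=1}^4(x_i;q)_\infty},
\]
which is symmetric under the swap $2\leftrightarrow3$. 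So the vertical series agrees with it, which already gives the generating-function proof. To make this bijective, take as common set
\[
\mathcal S=\Part(1)\times\Part(2)\times\Part(3)\times\Part(4)\times\Dist(1234)\times(\text{cancelling pieces}),
\]
and refine by color content $\gamma$. Concretely, I would take $\mathcal S^H_\gamma$ to consist of the elements of $\mathcal H_\gamma$ together with pairs $\Part(12)\times\Dist(12)$ and $\Part(34)\times\Dist(34)$. These pairs are the colored models of the cancelling factors $(x_1x_2;q)_\infty/(x_1x_2;q)_\infty$, as in \eqref{eq:colored-gen}.

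Second, the two involutions. On the ``product side'' $\Part(1)\times\cdots\times\Part(4)\times\Dist(1234)$, the horizontal decomposition runs as follows. Apply $\DBZ_{\gamma_1,\gamma_2}$ and $\DBZ_{\gamma_3,\gamma_4}$; these produce $k$-part and $l$-part partitions colored $12$ and $34$. Then apply $\DBZ_{k,l}$ to these, which produces the $k\times l$ rectangle and a leftover $\Part_{\min}(1234)$-type partition. The leftover is cancelled against $\Dist(1234)$ by the standard sign-reversing involution that cancels $\Part(A)\times\Dist(A)$ down to the empty pair (move the largest part between the two components). Call the resulting involution $I_H$. Its fixed points, after transport through the DBZ bijections, are exactly $\mathcal H_\gamma$, weight-preserving, with all fixed points of positive sign. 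Define $I_V$ identically with pairings $(1,3),(2,4),(13,24)$. Since $\DBZ$ is a bijection, $I_H$ and $I_V$ can both be realized on the single set $\mathcal S_\gamma:=\Part_{\gamma_1}\times\cdots\times\Part_{\gamma_4}\times\Dist(1234)$ with the appropriate grading. Both involutions are then weight-preserving and sign-reversing off their fixed-point sets.

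Third, the involution principle. Define $\Phi_\gamma(h)$ by starting at $h\in\operatorname{Fix}(I_H)$ and alternately applying $I_V$ and $I_H$ until a point of $\operatorname{Fix}(I_V)$ is reached. Finiteness of $\mathcal S_\gamma$ in each weight, together with injectivity of the alternating path, guarantees termination, and the inverse is obtained by running the path backwards. Weight is preserved because every step preserves weight.

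The main obstacle is the second step: making the coupling level honest. The $\Dist(1234)$ factor must be shared by both decompositions, and the third DBZ step must output a $1234$-colored partition. Here $\DBZ_{k,l}$ produces partitions $\Part_{\min(k,l)}$ and $\Part_{|k-l|}$ rather than a single $1234$-colored object, so I expect to need the pentagon in the form $F(X,Y)=(XY;q)_\infty/((X;q)_\infty(Y;q)_\infty)$ read as a signed-set statement. That means a sign-reversing involution on $\Part(X)\times\Part(Y)\times\Dist(XY)$ whose fixed points are rectangle-plus-two-partition objects. I would build this first by combining $\DBZ$ with the $\Part/\Dist$ cancellation on the $XY$-colored output, checking carefully that it is compatible with the color grading by $\gamma$. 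Once that local two-color signed involution is in hand, applying it three times in each of the two orders gives $I_H$ and $I_V$.
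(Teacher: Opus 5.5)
Your step-2 construction works once it is stated correctly, and it is a genuinely different route from the paper's. The ``main obstacle'' you raise comes from misreading $\DBZ$.

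By \eqref{eq:finite-DBZ-map}, $\DBZ_{k,l}$ maps $\Part_k(12)\times\Part_l(34)$ onto $\bigsqcup_{0\le m\le\min(k,l)}\Rect_{k-m,l-m}\times\Part_{k-m}(12)\times\Part_{l-m}(34)\times\Part_m(1234)$. So it always outputs exactly one $1234$-colored partition, never ``$\Part_{\min(k,l)}$ and $\Part_{|k-l|}$''. The $k\times l$ rectangle occurs only on the piece $m=0$.

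The correct construction is as follows. Compose $\DBZ$ on the colors $(1,2)$ and on $(3,4)$ with $\DBZ$ on the intermediate $(12,34)$-colored outputs. This gives a color- and weight-preserving bijection $\Psi_H:\prod_i\Part(i)\to\mathcal H\times\Part(1234)$, where $\mathcal H=\bigsqcup_\gamma\mathcal H_\gamma$. Set $I_H=\Psi_H^{-1}\circ T_{1234}\circ\Psi_H$ on $\prod_i\Part(i)\times\Dist(1234)$, extending $\Psi_H$ by the identity on $\Dist(1234)$. Here $T_{1234}$ toggles between the $\Part(1234)$ output and the $\Dist(1234)$ factor.

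A toggle changes the lengths of the four $\Part(i)$ components by $\pm1$. Hence the color-$\gamma$ part of your set is $\bigsqcup_t\prod_i\Part_{\gamma_i-t}(i)\times\Dist_t(1234)$, not $\prod_i\Part_{\gamma_i}\times\Dist(1234)$ as you wrote.

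The fixed points are the objects with both $1234$-factors empty, i.e.\ exactly $\mathcal H_\gamma$, and all have positive sign. The same holds for $I_V$ with $\Psi_V$. Garsia--Milne then finishes the proof. It is the positivity of the $I_H$-fixed points, not injectivity alone, that keeps the path from stalling at an $I_H$-fixed point.

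Your fallback, signed two-color involutions at all three nodes, is essentially the paper's route. However, it only works on a set containing cancelling pairs $\Dist(A)\Part(A)$ for all four edge colors $12,34,13,24$. Your $\mathcal S^H_\gamma$, with only the $12$ and $34$ pairs, is not common to both sides.

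The paper works on the larger set $\mathfrak S$ of \eqref{eq:common-set}. It takes $I_H$ to be the first-moving product of $J_{1,2},J_{3,4},J_{12,34},T_{13},T_{24}$, with each $J_{A,B}$ built directly from Algorithm Z. So every step of its path is a single local move, and the involution principle is used only once.

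Your version uses the minimal signed set modelling $(x_{1234};q)_\infty/\prod_i(x_i;q)_\infty$ and signs only the coupling node. In effect it bijectively cancels the common factor $\Part(1234)$ from $\mathcal H\times\Part(1234)\cong\prod_i\Part(i)\cong\mathcal V\times\Part(1234)$. This nicely isolates the coupling as the one place where cancellation is needed.

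The cost is that every step of your path requires evaluating $\Psi_H^{\pm1}$ or $\Psi_V^{\pm1}$, and these are themselves built from Garsia--Milne bijections. Your $\Phi_\gamma$ is therefore a nested involution-principle bijection and need not coincide with the paper's. It proves the existence of an explicit weight-preserving bijection, but not literally the clause that it comes from the paper's $I_H,I_V$ on $\mathfrak S$.
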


Theorems~\ref{thm:Keller-coefficient}, \ref{thm:KellerE} follow immediately from
Theorem~\ref{thm:main-bijection}.  We first record an algebraic proof, then
construct $\Phi_\gamma$.

\section{Proof by generating functions}
\label{s:Keller-algebra}

Put $x_A=\prod_{i\in A}x_i$ for
$A\subseteq\{1,2,3,4\}$.  Summed over all $\gamma$, the horizontal side
of \eqref{eq:Keller} is
\[
  F(x_1,x_2)F(x_3,x_4)F(x_{12},x_{34}).
\]
Lemma~\ref{lem:F-product} turns this into
\begin{equation}
\label{eq:common-product-H}
\frac{(x_{12};q)_\infty}
       {(x_1;q)_\infty(x_2;q)_\infty}
 \frac{(x_{34};q)_\infty}
       {(x_3;q)_\infty(x_4;q)_\infty}
 \frac{(x_{1234};q)_\infty}
       {(x_{12};q)_\infty(x_{34};q)_\infty}
=
\frac{(x_{1234};q)_\infty}
     {\prod_{i=1}^4(x_i;q)_\infty}.
\end{equation}
The vertical side has generating series
\[
  F(x_1,x_3)F(x_2,x_4)F(x_{13},x_{24}),
\]
and the same cancellation gives
\begin{equation}
\label{eq:common-product-V}
  \frac{(x_{1234};q)_\infty}
       {\prod_{i=1}^4(x_i;q)_\infty}.
\end{equation}
Taking the coefficient of
$x_1^{\gamma_1}x_2^{\gamma_2}x_3^{\gamma_3}x_4^{\gamma_4}$
proves \eqref{eq:Keller}.

\begin{remark}
\label{rem:algebra-blueprint}
The proof displays the blueprint for the combinatorics.  Each quotient
\[
  \frac{(AB;q)_\infty}{(A;q)_\infty(B;q)_\infty}
\]
must become a local fixed-point set carrying the rectangle weight.  The
cancelled factors $(x_{12};q)_\infty$, $(x_{34};q)_\infty$, and their
vertical analogues must all be present before either cancellation is
chosen.  This is precisely why the common signed set contains all four
edge colors.
\end{remark}

\section{Algorithm Z and finite Durfee cancellation}
\label{s:algorithm-Z}

We now construct the local involution used throughout the paper.  The
only insertion tool required is Algorithm Z; see
\cite{AndrewsBressoud1984,BressoudZeilberger1989,ChenChenFuZang2010}.

\subsection{Algorithm Z}

Let $\Part_{r,\leq p}$ denote the set of partitions with $r$ parts, every part
at most $p$.

\begin{thm}[Algorithm Z]
\label{thm:algorithm-Z}
For $p,r\geq0$ there is a weight-preserving bijection
\[
  Z_{p,r}:
  \Part_p\times\Part_r
  \longrightarrow
  \Part_{p+r}\times\Part_{r,\leq p}.
\]
\end{thm}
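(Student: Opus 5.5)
The plan is to build $Z_{p,r}$ by iterating a single-part insertion $r$ times. First I will check that the target has the right generating function: $\Part_{r,\leq p}$ is the set of weakly decreasing $r$-tuples with entries in $\{0,\dots,p\}$, so
\[
\sum_{\mu\in\Part_{r,\le p}}q^{|\mu|}=\frac{(q)_{p+r}}{(q)_p(q)_r}.
\]
Hence $\cP_p\cP_r=\cP_{p+r}\cdot\frac{(q)_{p+r}}{(q)_p(q)_r}$, and the statement is the bijective form of this $q$-binomial identity. I will prove it by induction on $r$, so the basic object is a one-part insertion map
\[
\iota_n:\Part_n\times\N\longrightarrow\Part_{n+1}\times\{0,1,\dots,n\}.
\]
This is weight-preserving when the recorded index $i$ has weight $q^i$, matching $\frac{1}{(q)_n(1-q)}=\frac{1}{(q)_{n+1}}\cdot\frac{1-q^{n+1}}{1-q}$.

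For $\iota_n$ I will use the Bressoud--Zeilberger shift-insertion. Given $\nu=(\nu_1\ge\dots\ge\nu_n)$ and $m\ge0$, choose an index $i$ and form
\[
(\nu_1+1,\dots,\nu_i+1,\;m-i,\;\nu_{i+1},\dots,\nu_n),
\]
whose weight is $|\nu|+i+(m-i)=|\nu|+m$. The index is chosen so that this tuple is weakly decreasing with nonnegative entries: take the least $i$ with $m-i\ge\nu_{i+1}$, with the convention $\nu_{n+1}=0$. The sequence $i\mapsto m-i-\nu_{i+1}$ is strictly decreasing in the relevant range, which should give existence and uniqueness; the output then records $(\text{new partition},\,i)$.

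The inverse takes $(\kappa,i)$ with $\kappa\in\Part_{n+1}$, removes $\kappa_{i+1}$, subtracts $1$ from the first $i$ parts, and sets $m=\kappa_{i+1}+i$. I must verify that it lands in $\Part_n$ and that the minimality condition defining $i$ is recovered. A careful inequality check, $\kappa_i-1\ge\kappa_{i+1}$ versus minimality, may force adjusting the convention to ``strict'' on one side, as in Algorithm Z proper.

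Then $Z_{p,r}$ inserts the parts of $\mu\in\Part_r$ into $\lambda\in\Part_p$ one at a time, largest part first, producing a partition with $p+r$ parts and indices $i_1,\dots,i_r$. The main obstacle is showing that the recorded indices form a weakly decreasing sequence bounded by $p$, i.e.\ an element of $\Part_{r,\le p}$, rather than an arbitrary sequence with $i_j\le p+j-1$. For this I will normalize the indices by subtracting the number of earlier inserted parts lying above the insertion position. I would then prove a comparison lemma: inserting $\mu_j\ge\mu_{j+1}$ successively places the second new part weakly below the first, so the normalized indices are weakly decreasing and never exceed $p$.

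Bijectivity will follow by running the inverse removals in reverse order, removing the positions determined by the normalized indices, smallest first. Equality of cardinalities in each weight, from the $q$-binomial identity, gives a consistency check. If the comparison lemma proves delicate, I would fall back on the classical Bressoud--Zeilberger formulation from \cite{BressoudZeilberger1989}, adapted to our conventions of nonnegative parts.
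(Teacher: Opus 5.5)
There is a genuine gap. Your overall architecture matches the paper: iterate a one-part insertion, read the records as an element of $\Part_{r,\leq p}$, and invert by extracting in reverse order. But the one-part map $\iota_n$ on which everything rests is wrong, in three ways.

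\textbf{It is not weight-preserving.} The tuple $(\nu_1+1,\dots,\nu_i+1,m-i,\nu_{i+1},\dots,\nu_n)$ already carries the full weight $|\nu|+m$. Recording $i$ with weight $q^i$ therefore overcounts. For $\nu=(3)$, $m=2$ your rule gives $i=1$ and the output $\bigl((4,1),1\bigr)$, of total weight $6\neq5$. Nor can you repair this by making the record weightless. Then the map would preserve $|\cdot|$ on the partition component, so for $n\geq1$ the single input of weight $0$ would have to cover all $n+1$ targets $(\kappa,i)$ with $\kappa$ the zero partition in $\Part_{n+1}$.

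\textbf{The index rule is not always defined.} For $\nu=(1)$, $m=0$, no $i\in\{0,1\}$ satisfies $m-i\geq\nu_{i+1}$, and $i=1$ would create the part $-1$. The claimed strict monotonicity of $i\mapsto m-i-\nu_{i+1}$ is also false: it equals $-1,1$ for $\nu=(3)$, $m=2$.

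\textbf{The normalization breaks the weight count.} Subtracting earlier inserted parts from the indices changes their sum. So even with a corrected $\iota_n$, the weight bookkeeping for $Z_{p,r}$ would have to be redone, and your comparison lemma is only asserted.

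The fix is the algorithm the paper recalls: leave the existing parts untouched and let the new part climb from the bottom, losing $1$ for each part it passes. For $\kappa\in\Part_n$ and $m\geq0$:
\begin{itemize}
\item take the least $j\geq0$ with $m-j\leq\kappa_{n-j}$ (convention $\kappa_0=\infty$);
\item output $(\kappa_1,\dots,\kappa_{n-j},m-j,\kappa_{n-j+1},\dots,\kappa_n)$;
\item record $j$ with weight $q^j$.
\end{itemize}
For $j\geq1$, minimality gives $m-j\geq\kappa_{n-j+1}\geq0$, so the output is a partition. The total weight is $(|\kappa|+m-j)+j$, as required.

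Now insert the parts of $\beta$ in decreasing order. A later part $m'\leq m$ can never pass the part $m-j$ just placed, since that would need $m'-j>m-j$. Hence the records satisfy $p\geq j_1\geq\cdots\geq j_r$ directly, with no normalization; this is exactly why $\bar\nu\in\Part_{r,\leq p}$.

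For the inverse, remove from $\mu$ the part with exactly $j_r$ parts below it and add $j_r$ to it. Then repeat with $j_{r-1},\dots,j_1$ on the remaining partition. The inequalities $j_{k-1}\geq j_k$ ensure two things: the extracted parts come out weakly decreasing, and reinsertion reproduces the same records.
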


We briefly recall the algorithm.  Insert the parts of
$\beta\in\Part_r$, in decreasing order, into
$\alpha\in\Part_p$.  To insert a part $b$, compare it with the smallest
part of the current insertion partition.  If $b$ is no larger, append it
and record $0$.  Otherwise remove the smallest part, insert $b-1$
recursively, restore the removed part, and increase the record by $1$.
After all insertions, the insertion partition $\mu$ has $p+r$ parts and
the records form a partition $\bar\nu\in\Part_{r,\leq p}$.  Each decrement in an
inserted part is matched by one unit in the record, so
\[
  |\alpha|+|\beta|=|\mu|+|\bar\nu|.
\]
Extraction reverses these steps.

We need the distinct-part version.  Let $
  \Dist_t^{<a}
  =
  \{\nu\in\Dist_t:\nu_1<a\}$.

\begin{cor}
\label{cor:distinct-Z}
For $0\leq t\leq a$, Algorithm Z induces a weight-preserving bijection
\begin{equation}
\label{eq:distinct-Z}
  \Theta_{a,t}:
  \Dist_t\times\Part_{a-t}
  \longrightarrow
  \Part_a\times\Dist_t^{<a}.
\end{equation}
\end{cor}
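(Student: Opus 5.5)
Distinct partitions are converted to ordinary partitions by subtracting the staircase, and the result is fed into Algorithm~Z (Theorem~\ref{thm:algorithm-Z}) with suitable parameters. For $\nu\in\Dist_t$ put $\bar\nu=\nu-(t-1,t-2,\ldots,0)\in\Part_t$. This map $\Dist_t\to\Part_t$ is a bijection, and it shifts weight by $q^{\binom t2}$ in the same way on both sides of \eqref{eq:distinct-Z}. Under it, $\Dist_t^{<a}$ corresponds exactly to $\Part_{t,\leq a-t}$, since $\nu_1<a$ iff $\bar\nu_1=\nu_1-(t-1)\leq a-t$.

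It therefore suffices to build a weight-preserving bijection
\[
  \Part_t\times\Part_{a-t}\longrightarrow\Part_a\times\Part_{t,\leq a-t}.
\]
First swap the two factors to get $\Part_{a-t}\times\Part_t$. Then apply $Z_{p,r}$ with $p=a-t$ and $r=t$; this is allowed because $0\le t\le a$ gives $p\ge0$. The result lands in $\Part_{p+r}\times\Part_{r,\leq p}=\Part_a\times\Part_{t,\leq a-t}$, as required.

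Now define $\Theta_{a,t}(\nu,\alpha)=(\mu,\ \rho+(t-1,\ldots,0))$, where $(\mu,\rho)=Z_{a-t,t}(\alpha,\bar\nu)$. Weight preservation follows from $|\alpha|+|\bar\nu|=|\mu|+|\rho|$: adding $\binom t2$ to both sides gives $|\nu|+|\alpha|=|\mu|+|\rho+\text{staircase}|$. Bijectivity holds because $\Theta_{a,t}$ is a composition of bijections, and its inverse is explicit: subtract the staircase and run the extraction of Algorithm~Z.

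The only point needing care is the bookkeeping of the bound, namely that $\Part_{r,\le p}$ with $p=a-t$ translates under the staircase shift to a strict top part $<a$ rather than $\le a$. That check is the one-line computation above, so there is no real obstacle. The corollary is just Algorithm~Z read through the staircase bijection, which explains why $\Dist_t^{<a}$ is the natural target.
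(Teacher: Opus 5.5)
Your proposal is correct and follows the paper's proof essentially verbatim: subtract the staircase $\rho_t=(t-1,\ldots,1,0)$, apply $Z_{a-t,t}$ to the pair (ordinary partition, de-staircased distinct partition), then add the staircase back to the record partition. Your explicit equivalence $\nu_1<a \iff \bar\nu_1\le a-t$ is slightly more careful than the paper, which only states the bound $\nu_1\le a-1$; the converse direction is what guarantees that the map is onto $\Dist_t^{<a}$.
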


\begin{proof}
Given $\delta\in\Dist_t$, remove its staircase 
($\rho_t=(t-1,t-2,\ldots,1,0)$), that is,
\[
  \bar\delta=\delta-\rho_t\in\Part_t.
\]
Applying $Z_{a-t,t}$ to $(\alpha,\bar\delta)$ we get
$(\mu,\bar\nu)\in
  \Part_a\times\Part_{t,\leq a-t}$.
Set $\nu=\bar\nu+\rho_t$.  Then $\nu$ is distinct and
\[
  \nu_1\leq(a-t)+(t-1)=a-1.
\]
The staircase removed from $\delta$ has been added to $\bar\nu$, so the
total weight is unchanged.  Every step is reversible.
\end{proof}

\subsection{The local sign-reversing involution}

For two colors $A,B$, consider
\[
  \mathfrak C(A,B)
  =
  \Dist(AB)\times\Part(A)\times\Part(B).
\]
The sign is inherited from the first factor.

\begin{prop}[Finite Durfee cancellation]
\label{prop:local-involution}
There is a weight-preserving sign-reversing involution
\[
  J_{A,B}:\mathfrak C(A,B)\longrightarrow\mathfrak C(A,B)
\]
whose fixed points are naturally
\begin{equation}
\label{eq:local-fixed}
  \bigsqcup_{a,b\geq0}
  \Rect_{a,b}\times\Part_a(A)\times\Part_b(B).
\end{equation}
\end{prop}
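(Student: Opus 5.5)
The plan is to build $J_{A,B}$ in two stages. First, Algorithm Z (in the form of Corollary~\ref{cor:distinct-Z}) separates off an $A$-colored partition with exactly $a$ parts. Then an elementary ``smallest part toggling'' involution handles the remaining distinct/ordinary pair.

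Fix $a,b\geq0$ and restrict to the part of $\mathfrak C(A,B)$ of color $A^aB^b$. An element there is a triple $(\delta,\lambda,\mu)$ with $\delta\in\Dist_t(AB)$, $\lambda\in\Part_{a-t}(A)$ and $\mu\in\Part_{b-t}(B)$, where $0\leq t\leq\min(a,b)$, and its sign is $(-1)^t$. Since $J_{A,B}$ only needs to preserve the color $A^aB^b$ and the $q$-weight, it suffices to work with fixed $(a,b)$.

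\emph{Step 1.} Apply $\Theta_{a,t}$ of \eqref{eq:distinct-Z} to $(\delta,\lambda)$. This gives a pair $(\alpha,\nu)\in\Part_a\times\Dist_t^{<a}$, and the map is weight-preserving and bijective. The $(a,b)$-component of $\mathfrak C(A,B)$ is therefore identified with
\[
  \Part_a\times\bigsqcup_{t}\bigl(\Dist_t^{<a}\times\Part_{b-t}\bigr),
\]
with sign still $(-1)^t$. The factor $\alpha$ will be carried along unchanged.

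\emph{Step 2.} Define an involution $\iota$ on $\bigsqcup_t\Dist_t^{<a}\times\Part_{b-t}$.

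Given $(\nu,\mu)$, let $m$ be the smallest value occurring either as a part of $\nu$ or as a part of $\mu$ that is $<a$. If no such value exists, declare $(\nu,\mu)$ fixed. Otherwise split into two cases.
\begin{itemize}
\item If $m$ is a part of $\nu$, remove it from $\nu$ and insert it as a new part of $\mu$.
\item If not, $m$ is a part of $\mu$ that does not occur in $\nu$. Remove one copy of $m$ from $\mu$ and insert it into $\nu$. The result is still distinct with all parts $<a$.
\end{itemize}

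The following properties are then checked directly:
\begin{itemize}
\item The total number of parts stays $b$.
\item The total weight is unchanged.
\item $t$ changes by $\pm1$, so the sign flips.
\item The new configuration has the same minimal value $m$, because $m$ was minimal among all candidates and the candidate set is unchanged as a multiset of values. The other case therefore applies, and $\iota^2=\mathrm{id}$.
\end{itemize}

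The fixed points are exactly the pairs with $\nu=\varnothing$ (so $t=0$) and every part of $\mu\in\Part_b$ at least $a$. Subtracting $a$ from each of the $b$ parts identifies these with $\Rect_{a,b}\times\Part_b$, where the rectangle accounts for the weight $q^{ab}$.

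\emph{Step 3.} Set $J_{A,B}=\Theta^{-1}\circ(\mathrm{id}\times\iota)\circ\Theta$. It is weight-preserving, sign-reversing and an involution. Its fixed points are naturally $\Rect_{a,b}\times\Part_a(A)\times\Part_b(B)$, read as $(\alpha,\mu-a^b)$, which is \eqref{eq:local-fixed}.

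As a sanity check, the generating-function shadow of Step 2 is
\[
  \sum_t(-1)^te_t(1,q,\dots,q^{a-1})B^t\cdot\frac{1}{(B;q)_\infty}
  =\frac{(B;q)_a}{(B;q)_\infty}=\sum_b q^{ab}\cP_bB^b,
\]
consistent with Lemma~\ref{lem:F-product}.

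The step needing most care is the well-definedness of $\iota$. One must confirm that the minimal value $m$ is the same before and after the move, so that $\iota$ really is an involution. One must also confirm that the bound $\nu_1<a$ supplied by Corollary~\ref{cor:distinct-Z} is exactly what makes the fixed-point set equal to partitions with all parts $\geq a$. Without that bound, parts of $\nu$ would not be comparable with the threshold $a$ and the toggling would not close up.
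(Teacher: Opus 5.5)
Your proposal is correct and essentially the same as the paper's proof. Both conjugate by $\Theta_{a,t}$ from Corollary~\ref{cor:distinct-Z} while holding the $\Part_a$ factor fixed, toggle the smallest value below $a$ between the distinct-part partition and the $B$-colored partition, and identify the fixed points as $\nu=\blank$ with every $B$-part at least $a$, removing the $a\times b$ rectangle. Your explicit check that the multiset of candidate values is unchanged (so the same minimum is selected on the return step) and your generating-function sanity check are useful additions, but the argument is the same.
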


\begin{proof}
Take $(\delta,\alpha,\beta)\in\mathfrak C(A,B)$ and put
\[
  t=\ell(\delta),\qquad
  a=\ell(\alpha)+t,\qquad
  b=\ell(\beta)+t.
\]
The monomial part of its weight is $A^aB^b$.  Apply
$\Theta_{a,t}$ to $(\delta,\alpha)$:
\[
  (\delta,\alpha)
  \longleftrightarrow
  (\mu,\nu)
  \in\Part_a\times\Dist_t^{<a}.
\]
We hold $\mu$ fixed and operate on $(\nu,\beta)$.

If a nonnegative integer smaller than $a$ occurs in $\nu$ or $\beta$, let
$j$ be the smallest such integer.  If $j$ occurs in $\nu$, move it from
$\nu$ to $\beta$.  If it does not occur in $\nu$, move one copy from
$\beta$ to $\nu$.  Denote the resulting pair by $(\nu',\beta')$.  The
move preserves weight and changes $\ell(\nu)$ by one.  It is its own
inverse: after the move, the same $j$ is still the smallest eligible part,
and its membership in $\nu$ has been toggled.  Finally apply the inverse
of \eqref{eq:distinct-Z} to $(\mu,\nu')$.  This defines $J_{A,B}$ on all
nonfixed objects and reverses their sign.

There is no eligible $j$ precisely when $\nu=\blank$ and every part of
$\beta$ is at least $a$.  In this case $t=0$, $\delta=\blank$, and
$\alpha=\mu\in\Part_a$.  Subtracting $a$ from each of the $b$ parts of
$\beta$ gives an arbitrary partition in $\Part_b$ and removes a rectangle
of weight $q^{ab}$.  This identifies the fixed points with
\eqref{eq:local-fixed}.
\end{proof}

\begin{remark}
\label{rem:local-product}
At the level of signed generating functions,
Proposition~\ref{prop:local-involution} is the combinatorial identity
\[
  \frac{(AB;q)_\infty}{(A;q)_\infty(B;q)_\infty}
  =
  \sum_{a,b\geq0}q^{ab}\cP_a\cP_bA^aB^b.
\]
This is Lemma~\ref{lem:F-product}, now with an explicit cancellation.
\end{remark}

\subsection{The finite Durfee--BZ bijection}

There is also an elementary cancellation involution on
\[
  \Dist(A)\times\Part(A).
\]
If the two partitions are not both empty, let $j$ be their smallest
occurring part.  Toggle one copy of $j$ between the distinct and ordinary
partition.  Call this involution $T_A$.  Its sole fixed point is
$(\blank,\blank)$.

\begin{proof}[Construction of the bijection \eqref{eq:finite-DBZ-map}]
Consider
\[
  \Dist(AB) \times \Part(A) \times \Part(B) \times \Part(AB).
\]
It has two sign-reversing involutions.  The first applies $T_{AB}$ to the
first and last factors; its fixed points are
$\Part(A)\Part(B)$.  The second applies $J_{A,B}$ to the first three
factors; its fixed points are
\[
  \bigsqcup_{r,s,k\geq0}
  \Rect_{r,s}\Part_r(A)\Part_s(B)\Part_k(AB).
\]
In bidegree $(a,b)$ one has $r+k=a$ and $s+k=b$.  The Garsia--Milne
alternating-path construction \cite{GarsiaMilne1981} gives the
weight-preserving bijection
\eqref{eq:finite-DBZ-map}.  Concretely, start at a fixed point of the
first involution and apply the second and first involutions alternately
until a fixed point of the second is reached.

For completeness, here is the termination argument.  In a fixed
homogeneous component, draw an edge for each nontrivial orbit of either
involution.  Every component of this finite graph is an alternating path
or an alternating cycle.  A fixed point of the first involution has no
edge of the first kind, so it lies at the end of a path, not on a cycle.
Moreover, the fixed points of both involutions have positive sign.  If
the other end of the path were also fixed by the first involution, then
the path would begin and end with edges of the second kind and hence
would have odd length.  Its endpoints would consequently have opposite
signs, a contradiction.  Thus the other endpoint is fixed by the second
involution.  The alternating procedure reaches this endpoint, and
traversing the same path in reverse gives the inverse map.  This is
$\DBZ_{a,b}$.
\end{proof}

\section{The common signed set}
\label{s:common-set}

For each edge color $A\in\{12,34,13,24\}$, the pair
\[
  \Dist(A)\Part(A)
\]
has signed generating function $1$ by \eqref{eq:colored-gen}.  Define
\begin{equation}
\label{eq:common-set}
\begin{split}
\mathfrak S={}&
\Part(1)\Part(2)\Part(3)\Part(4)\Dist(1234)\\
&{}\times
\Dist(12)\Part(12)\Dist(34)\Part(34)\\
&{}\times
\Dist(13)\Part(13)\Dist(24)\Part(24).
\end{split}
\end{equation}
The sign of an object is $(-1)^d$, where $d$ is the total number of parts
in its five distinct-part factors.  Its signed generating function is
\[
  \frac{(x_{1234};q)_\infty}
       {\prod_{i=1}^4(x_i;q)_\infty}.
\]
This is the common product found algebraically in
\eqref{eq:common-product-H} and \eqref{eq:common-product-V}.

For $\gamma\in\N^4$ and $N\in\N$, let
$\mathfrak S_{\gamma,N}$ consist of the objects of monomial weight
$x^\gamma=x_1^{\gamma_1}\cdots x_4^{\gamma_4}$ and $q$-weight $q^N$.

\begin{lem}
\label{lem:homogeneous-finite}
Every set $\mathfrak S_{\gamma,N}$ is finite.
\end{lem}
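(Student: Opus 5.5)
The plan is to bound separately the number of parts and the sizes of the parts in each of the thirteen factors of $\mathfrak S$, using only the two gradings $x^\gamma$ and $q^N$.

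\emph{Number of parts.} Each factor $\Part(A)$ or $\Dist(A)$ with color $A$ contributes $A^{\ell}$ to the monomial weight, where $\ell$ is its number of parts. Every color involved is $x_i$, $x_{12},x_{34},x_{13},x_{24}$, or $x_{1234}$, and each contains at least one $x_i$ with exponent $1$. The total exponent of the monomial weight is $\gamma_1+\gamma_2+\gamma_3+\gamma_4$, and each part contributes at least one to it. Fixing $\gamma$ therefore bounds the number of parts in every factor by $|\gamma|=\sum_i\gamma_i$.

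\emph{Sizes of parts.} The $q$-weight of an object is the sum of all parts of all thirteen (distinct or ordinary) partitions. Fixing it at $N$ bounds every part by $N$, since all parts are nonnegative integers.

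\emph{Counting.} A partition with at most $|\gamma|$ parts, each in $\{0,\dots,N\}$, is determined by a multiset of bounded size drawn from a finite set. There are therefore finitely many choices for each of the thirteen factors, and hence finitely many objects in $\mathfrak S_{\gamma,N}$.

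\emph{Main obstacle.} The one point requiring care is that zero parts are allowed. Parts equal to $0$ add nothing to $q^N$, so the $q$-grading alone would permit arbitrarily many of them. They are excluded precisely because each part carries a nontrivial color and so contributes to the monomial degree. Thus the number-of-parts bound must come from $\gamma$, not from $N$, and it relies on no color being the trivial monomial $1$.
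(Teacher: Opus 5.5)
Your proof is correct and follows the same route as the paper: the color degree $\gamma$ bounds the length of every factor, and once the lengths are bounded, the $q$-weight $N$ leaves only finitely many choices of parts. Your remark about zero parts, which are controlled only because no color is the trivial monomial, spells out what the paper's one-line argument leaves implicit.
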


\begin{proof}
The four color degrees bound the length of every partition occurring in
\eqref{eq:common-set}.  Once the lengths are fixed, there are only
finitely many tuples of nonnegative parts with total sum $N$.
\end{proof}

We will build two involutions on $\mathfrak S$.  We use the following
standard product convention.  Suppose a product of signed sets has
sign-reversing involutions $f_1,\ldots,f_m$ on disjoint blocks.  Scan the
blocks in the displayed order and apply $f_i$ to the first block not fixed
by $f_i$.  If every block is fixed, fix the whole object.  The resulting
\emph{first-moving product involution} is sign reversing: all earlier
blocks stay fixed, and a paired point of the selected block remains
nonfixed, so the same block is selected on the return step.

\section{The horizontal and vertical involutions}
\label{s:HV-involutions}

\subsection{The horizontal involution}

Regroup the factors of \eqref{eq:common-set} into the following five
disjoint blocks, and consider involutions displayed under them:
\begin{equation}\label{eq:H-blocks}
    \begin{array}{ccccc}
    \mathfrak C(1,2) &
    \mathfrak C(3,4) &
    \mathfrak C(12,34) &
    \Dist(13)\Part(13) &
    \Dist(24)\Part(24) 
    \\
    J_{1,2} &
    J_{3,4} &
    J_{12,34} &
    T_{13} &
    T_{24}.
    \end{array}
\end{equation}
Define $I_H$ to be their first-moving product involution. 

\begin{prop}
\label{prop:H-fixed}
For every $\gamma$, the fixed points of $I_H$ of color degree $\gamma$
are naturally identified, weight for weight, with $\mathcal H_\gamma$.
\end{prop}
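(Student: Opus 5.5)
The fixed points of a first-moving product involution are exactly the objects in which every block is fixed by its own involution. So I would first record this as a general remark, since it is immediate from the definition: if some block were not fixed, $I_H$ would apply the corresponding $f_i$ to the first such block and move the object. The fixed-point set of $I_H$ is therefore the product of the five local fixed-point sets, and it only remains to identify each factor and then track the color degrees.

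By Proposition~\ref{prop:local-involution}, the fixed points of $J_{1,2}$ on $\mathfrak C(1,2)$ are $\bigsqcup_{r,s}\Rect_{r,s}\times\Part_r(1)\times\Part_s(2)$. Similarly, those of $J_{3,4}$ are $\bigsqcup_{u,v}\Rect_{u,v}\times\Part_u(3)\times\Part_v(4)$. For $J_{12,34}$ on $\mathfrak C(12,34)=\Dist(1234)\Part(12)\Part(34)$ we get $\bigsqcup_{k,l}\Rect_{k,l}\times\Part_k(12)\times\Part_l(34)$. The involutions $T_{13}$ and $T_{24}$ each have the single fixed point $(\blank,\blank)$, which has weight $1$. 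In each case the identification is weight-preserving, and the sign is $+1$: the fixed points of $J$ have empty distinct-part factor, and the fixed points of $T$ are empty.

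Before multiplying these together, I would check that the regrouping \eqref{eq:H-blocks} really is a partition of the factors of \eqref{eq:common-set}. The factors $\Part(1)\Part(2)$ go together with $\Dist(12)$ to form $\mathfrak C(1,2)$, and $\Part(3)\Part(4)$ go with $\Dist(34)$ to form $\mathfrak C(3,4)$. The factor $\Dist(1234)$ pairs with $\Part(12)\Part(34)$ to form $\mathfrak C(12,34)$. The remaining four factors form the two $T$-blocks. Each factor is used exactly once.

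Next I would compute the color degree of a fixed point. Write its data as $(r,s,u,v,k,l)$ together with the six partitions. The monomial weight is
\[
x_1^r x_2^s x_3^u x_4^v (x_1x_2)^k (x_3x_4)^l ,
\]
so the degree is $\gamma=(r+k,\,s+k,\,u+l,\,v+l)$. This is exactly the condition $(r,s,k;u,v,l)\kp_H\gamma$. The $q$-weight is $q^{rs+uv+kl}$ times the weights of the six partitions. This matches the weight of the corresponding element of $\mathcal H_\gamma$ term by term, giving the natural identification.

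The main point requiring care is the interpretation of $J_{12,34}$. Proposition~\ref{prop:local-involution} is stated for abstract colors $A,B$, so here one takes $A=x_{12}$ and $B=x_{34}$ with $AB=x_{1234}$. The construction of $J_{A,B}$ uses the integer $a=\ell(\alpha)+t$, which is read off from the lengths of the partitions. It does not depend on the color monomials, so the substitution is legitimate. Once this is noted, everything else is bookkeeping.
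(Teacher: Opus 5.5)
Your proposal is correct and follows essentially the same argument as the paper. All five blocks must be fixed, so the $T_{13},T_{24}$ blocks are empty, and Proposition~\ref{prop:local-involution} identifies the three $J$-blocks with the rectangle-and-partition data. The color degrees $(r+k,s+k,u+l,v+l)$ then give exactly $(r,s,k;u,v,l)\kp_H\gamma$, with $q$-weight $q^{rs+uv+kl}$ times the six partition weights. Your extra checks (that the regrouping uses each factor of $\mathfrak S$ exactly once, and that $J_{A,B}$ applies verbatim to the composite colors $x_{12},x_{34}$) are sound and make explicit what the paper leaves implicit.
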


\begin{proof}
All five blocks in \eqref{eq:H-blocks} must be fixed.  The last two are
therefore empty.  By Proposition~\ref{prop:local-involution}, the first
block contributes
$\Rect_{r,s}\Part_r(1)\Part_s(2)$,
the second contributes
$\Rect_{u,v}\Part_u(3)\Part_v(4)$,
and the third contributes $\Rect_{k,l}\Part_k(12)\Part_l(34)$.
The four color degrees are
\[
  r+k,\qquad s+k,\qquad u+l,\qquad v+l.
\]
Thus color degree $\gamma$ is equivalent to
$(r,s,k;u,v,l)\kp_H\gamma$.  The three rectangles contribute
$q^{rs+uv+kl}$, and the six partitions contribute the six factors
$\cP$.  This is exactly $\mathcal H_\gamma$.
\end{proof}

\subsection{The vertical involution}
Now regroup the factors of \eqref{eq:common-set} into the following five disjoint blocks, and consider involutions displayed under them:
\begin{equation}\label{eq:V-blocks}
    \begin{array}{ccccc}
    \mathfrak C(1,3) &
    \mathfrak C(2,4) &
    \mathfrak C(13,24) &
    \Dist(12)\Part(12) &
    \Dist(34)\Part(34) 
    \\
    J_{1,3} &
    J_{2,4} &
    J_{13,24} &
    T_{12} &
    T_{34}.
    \end{array}
\end{equation}
Define $I_V$ to be their first-moving product involution. 

\begin{prop}
\label{prop:V-fixed}
For every $\gamma$, the fixed points of $I_V$ of color degree $\gamma$
are naturally identified, weight for weight, with $\mathcal V_\gamma$.
\end{prop}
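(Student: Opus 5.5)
The argument mirrors the proof of Proposition~\ref{prop:H-fixed}, with the horizontal blocks \eqref{eq:H-blocks} replaced by the vertical blocks \eqref{eq:V-blocks}. We first check that \eqref{eq:V-blocks} is a regrouping of \eqref{eq:common-set}. The three blocks $\mathfrak C(1,3)$, $\mathfrak C(2,4)$, $\mathfrak C(13,24)$ use the factors
\[
\Dist(13)\Part(1)\Part(3),\qquad \Dist(24)\Part(2)\Part(4),\qquad \Dist(1234)\Part(13)\Part(24).
\]
The remaining two blocks use $\Dist(12)\Part(12)$ and $\Dist(34)\Part(34)$. Since $x_{13}x_{24}=x_{1234}$, this accounts for every factor of $\mathfrak S$ exactly once. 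Hence $I_V$ is a well-defined first-moving product involution on $\mathfrak S$. Its fixed points are exactly the objects in which every block is fixed by its own involution.

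Next we read off the fixed points block by block. Since $T_{12}$ and $T_{34}$ each have sole fixed point $(\blank,\blank)$, the last two blocks must be empty. By Proposition~\ref{prop:local-involution}, applied with $(A,B)=(x_1,x_3)$, $(x_2,x_4)$ and $(x_{13},x_{24})$, the first three blocks contribute respectively
\[
\Rect_{r,s}\Part_r(1)\Part_s(3),\qquad \Rect_{u,v}\Part_u(2)\Part_v(4),\qquad \Rect_{k,l}\Part_k(13)\Part_l(24).
\]
The weights multiply, so the fixed point carries weight $q^{rs+uv+kl}$ times the six partition weights.

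It remains to compute the color degree. Color $x_1$ appears in $\Part_r(1)$ and $\Part_k(13)$, giving $\gamma_1=r+k$. Color $x_3$ appears in $\Part_s(3)$ and $\Part_k(13)$, giving $\gamma_3=s+k$. Color $x_2$ appears in $\Part_u(2)$ and $\Part_l(24)$, giving $\gamma_2=u+l$. Color $x_4$ appears in $\Part_v(4)$ and $\Part_l(24)$, giving $\gamma_4=v+l$. These equations are precisely the condition $(r,s,k;u,v,l)\kp_V\gamma$, so the fixed points of color degree $\gamma$ are identified, weight for weight, with $\mathcal V_\gamma$.

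No step is a genuine obstacle. The only points needing care are the correct assignment of the six ordinary factors $\Part(1),\Part(2),\Part(3),\Part(4),\Part(13),\Part(24)$ to the three $\mathfrak C$ blocks, and the fact that the "natural" identification is the one supplied by Proposition~\ref{prop:local-involution} on each block.
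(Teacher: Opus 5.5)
Your proof is correct and takes the same approach as the paper, which says only that the horizontal argument carries over verbatim with the pairings $(1,3),(2,4),(13,24)$ and the toggles $T_{12},T_{34}$. You write out exactly that substitution, including the color-degree bookkeeping that yields $\kp_V$, and you add an explicit check, left implicit in the paper, that the vertical blocks use each factor of $\mathfrak S$ exactly once.
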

The proof is identical to the horizontal argument after replacing the
pairings $(1,2),(3,4),(12,34)$ by
$(1,3),(2,4),(13,24)$ and the toggles $T_{13},T_{24}$ by
$T_{12},T_{34}$.  The three applications of $J$ produce precisely the
vertical block data in \eqref{eq:V-blocks}, while the two toggles force
the remaining paired factors to be empty.

\begin{remark}
Every fixed point of $I_H$ or $I_V$ has positive sign.  Indeed, all
distinct-part factors in a fixed point are empty.  This simple observation
also makes the direction of the alternating paths transparent.
\end{remark}

\section{The alternating-path bijection}
\label{s:alternating-path}

We finish the construction of Theorem~\ref{thm:main-bijection}.  Fix
$h\in\mathcal H_\gamma$ and regard it, through
Proposition~\ref{prop:H-fixed}, as an $I_H$-fixed point of
$\mathfrak S$.  This is the involution principle of
\cite{GarsiaMilne1981}.  If $h$ is fixed by $I_V$, set
$\Phi_\gamma(h)=h$.
Otherwise form the path
\begin{equation}
\label{eq:alternating-path}
  h
  \xmapsto{I_V}
  I_V(h)
  \xmapsto{I_H}
  I_HI_V(h)
  \xmapsto{I_V}
  I_VI_HI_V(h)
  \xmapsto{I_H}\cdots.
\end{equation}
Stop at the first $I_V$-fixed point, and identify it with an element of
$\mathcal V_\gamma$ by Proposition~\ref{prop:V-fixed}.

\begin{prop}
\label{prop:path-terminates}
The path \eqref{eq:alternating-path} terminates.  Its endpoint is
$I_V$-fixed, and reversing the path gives the inverse construction.
\end{prop}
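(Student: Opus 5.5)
The plan is to repeat the Garsia--Milne termination argument given for \eqref{eq:finite-DBZ-map} in Section~\ref{s:algorithm-Z}, now on the finite sets $\mathfrak S_{\gamma,N}$. First I check that both $I_H$ and $I_V$ preserve $\mathfrak S_{\gamma,N}$. Each building block is weight preserving: the local involutions $J_{A,B}$ of Proposition~\ref{prop:local-involution} preserve both the monomial weight $A^aB^b$ and the $q$-weight, and the toggles $T_A$ move one part between two factors of the same color. The first-moving product convention keeps all other blocks untouched, so $I_H$ and $I_V$ preserve $(\gamma,N)$. They are involutions and are sign reversing on their non-fixed points, by the remark following Lemma~\ref{lem:homogeneous-finite}. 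Since $h\in\mathcal H_\gamma$ has some $q$-weight $N$, the whole path \eqref{eq:alternating-path} lives in the set $\mathfrak S_{\gamma,N}$, which is finite by Lemma~\ref{lem:homogeneous-finite}.

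Next I form the graph on $\mathfrak S_{\gamma,N}$ with an $H$-edge joining $x$ to $I_H(x)$ whenever $x\neq I_H(x)$, and similarly a $V$-edge for $I_V$. Every vertex has at most one edge of each kind, so each connected component is either an alternating path or an alternating cycle. The starting point $h$ is $I_H$-fixed, so it has no $H$-edge. It therefore has degree at most one and is an endpoint of a path, not a vertex on a cycle. If $h$ is also $I_V$-fixed, the path is trivial and $\Phi_\gamma(h)=h$. Otherwise the procedure \eqref{eq:alternating-path} walks along this path. It cannot revisit a vertex, since the vertices are distinct along a path, so by finiteness it reaches the other endpoint.

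The remaining point is to show that the other endpoint is $I_V$-fixed and not $I_H$-fixed. Every edge joins objects of opposite sign. By the remark after Proposition~\ref{prop:V-fixed}, every $I_H$-fixed and every $I_V$-fixed point has positive sign, since all distinct-part factors are empty. Suppose the far endpoint $x$ were $I_H$-fixed but not $I_V$-fixed. Then the path would start and end with $V$-edges, so it would have odd length, and $h$ and $x$ would have opposite signs. This contradicts the positivity of both. Hence the walk stops exactly when it first meets an $I_V$-fixed point.

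Reversibility comes from the fact that the path is determined by its component. Starting from the endpoint, which is an $I_V$-fixed point, and applying $I_H,I_V,\dots$ alternately traverses the same path backwards and returns to $h$. By the symmetric argument it stops at the first $I_H$-fixed point. This gives the inverse map $\mathcal V_\gamma\to\mathcal H_\gamma$, after identifying fixed points with elements of $\mathcal V_\gamma$ and $\mathcal H_\gamma$ via Propositions~\ref{prop:V-fixed} and~\ref{prop:H-fixed}.

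The only real obstacle is the invariance of the homogeneous components under the composite involutions. For $J_{A,B}$ this needs care, because $\Theta_{a,t}$ depends on $a=\ell(\alpha)+t$, and that quantity must be unchanged by the toggle of $j$. It is: moving $j$ between $\nu$ and $\beta$ changes $t$ by $\pm1$ and $\ell(\beta)$ by $\mp1$, while $\mu$ stays in $\Part_a$. So $a$, $b$, and the color degree are all preserved, and I expect this check to be short.
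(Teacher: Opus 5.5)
Your proposal is correct and follows the paper's proof essentially step for step. The shared steps are confinement to the finite set $\mathfrak S_{\gamma,N}$, the decomposition of the $H$/$V$ edge graph into alternating paths and cycles, the observation that $h$ has no $H$-edge, and the parity argument using positivity of all fixed points. Your extra check that $J_{A,B}$ preserves $a=\ell(\alpha)+t$ and $b=\ell(\beta)+t$, and your symmetric argument for the inverse, only spell out what the paper takes from Proposition~\ref{prop:local-involution} and its one-line reversal remark.
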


\begin{proof}
Both involutions preserve color and $q$-weight.  Hence the path remains in
one finite set $\mathfrak S_{\gamma,N}$ by
Lemma~\ref{lem:homogeneous-finite}.  Draw an $H$-edge between the two
members of every nontrivial $I_H$-orbit and a $V$-edge for $I_V$.
Each vertex has at most one edge of each kind, so every component is an
alternating path or an alternating cycle.

The starting vertex $h$ has no $H$-edge.  It therefore lies in a path
component, not a cycle.  If its other endpoint were also $I_H$-fixed, the
path would contain an odd number of edges, so that endpoint would have
negative sign.  This is impossible because every $I_H$-fixed point has
positive sign.  Thus the other endpoint has no $V$-edge and is
$I_V$-fixed.  Traversing the same path from that endpoint, alternating
$I_H$ and $I_V$, returns to $h$.
\end{proof}

\begin{proof}[Proof of Theorem~\ref{thm:main-bijection}]
Define $\Phi_\gamma$ by the endpoint rule above.  Every step of the path
preserves weight, and Proposition~\ref{prop:path-terminates} gives an
inverse.  Hence $\Phi_\gamma$ is a weight-preserving bijection
$\mathcal H_\gamma\to\mathcal V_\gamma$.
\end{proof}

\begin{example}[An alternating path]
\label{ex:alternating-path}
Let $\gamma=(1,1,1,1)$ and start with the element
$h\in\mathcal H_\gamma$ having parameters
\[
  (r,s,k;u,v,l)=(0,0,1;1,1,0)
\]
and six partition entries, in the order $(r,s,k,u,v,l)$,
\[
  (\blank,\blank,(0),(0),(0),\blank).
\]
Thus its only nontrivial rectangle is the $1$-by-$1$ rectangle
$\Rect_{u,v}$.  Under the fixed-point identification of
Proposition~\ref{prop:H-fixed}, the corresponding object of
$\mathfrak S$ has the following nonempty ordinary-partition factors:
\[
  \lambda^3=(0),\qquad
  \lambda^4=(1),\qquad
  \lambda^{12}=(0).
\]
Here the part $1$ in $\lambda^4$ incorporates the rectangle shift
between the colors $3$ and $4$.

For brevity, in the path below we list only the nonempty partition
factors; $\lambda$ denotes an ordinary partition and $\delta$ a
distinct-part partition.  The two local $J$-moves used in the path are
\[
\begin{aligned}
  J_{1,2}\bigl((0)^{12},\blank,\blank\bigr)
  &=
  \bigl(\blank,(0)^1,(0)^2\bigr),\\
  J_{1,3}\bigl(\blank,(0)^1,(0)^3\bigr)
  &=
  \bigl((0)^{13},\blank,\blank\bigr).
\end{aligned}
\]
Write the successive objects as
\[
\begin{aligned}
 S_0&=\langle\lambda^3=(0),\lambda^4=(1),
             \lambda^{12}=(0)\rangle,\\
 S_1&=\langle\lambda^3=(0),\lambda^4=(1),
             \delta^{12}=(0)\rangle,\\
 S_2&=\langle\lambda^1=(0),\lambda^2=(0),
             \lambda^3=(0),\lambda^4=(1)\rangle,\\
 S_3&=\langle\delta^{13}=(0),\lambda^2=(0),
             \lambda^4=(1)\rangle,\\
 S_4&=\langle\lambda^{13}=(0),\lambda^2=(0),
             \lambda^4=(1)\rangle.
\end{aligned}
\]
Then the alternating path is
\[
\begin{aligned}
 S_0&\xmapsto{\,I_V:T_{12}\,}S_1
      \xmapsto{\,I_H:J_{1,2}\,}S_2,\\
 S_2&\xmapsto{\,I_V:J_{1,3}\,}S_3
      \xmapsto{\,I_H:T_{13}\,}S_4.
\end{aligned}
\]
The last object $S_4$ is fixed by $I_V$: its $(1,3)$ block is empty, its
$(2,4)$ block carries the $1$-by-$1$ rectangle, and its $(13,24)$
block has lengths $(1,0)$.  Hence it represents the element
$\Phi_\gamma(h)\in\mathcal V_\gamma$ with vertical parameters
\[
  (r,s,k;u,v,l)=(0,0,1;1,1,0)
\]
and the same six numerical partition entries as $h$, now attached to
the vertical colors.  Every object on the path has weight
$x_1x_2x_3x_4q$.
\end{example}

\begin{proof}[Combinatorial proof of
Theorem~\ref{thm:Keller-coefficient}]
The two sides of \eqref{eq:Keller} are the weight generating functions of
$\mathcal H_\gamma$ and $\mathcal V_\gamma$.  They are equal by
Theorem~\ref{thm:main-bijection}.
\end{proof}

\begin{remark}
The algebraic cancellation in Section~\ref{s:Keller-algebra} and the
combinatorial cancellation here are term-for-term analogues.  The
horizontal factorization chooses the three binary nodes
\[
  (1,2),\quad(3,4),\quad(12,34),
\]
while the vertical factorization chooses
\[
  (1,3),\quad(2,4),\quad(13,24).
\]
The common signed set retains both choices simultaneously.  The
Garsia--Milne path is the mechanism that changes one binary factorization
into the other without passing through the signed generating function.
\end{remark}

\section{A standard-monomial interpretation}
\label{s:standard-monomials}

This section is logically independent of the preceding combinatorial
proof.  We prove one binary normal-form theorem and apply it to the
pentagon identity, the uncoupled identity, and Keller's identity.
The arc-space framework and the binary node degeneration used here are
known; the new point is their four-color multigraded and iterated use.
More precisely, we construct two standard-monomial bases, associated
with the pairings $(12),(34)$ and $(13),(24)$, and identify their
multidegree enumerators with the two sides of Keller's identity.  The
third binary factor in each construction also gives a direct
standard-monomial explanation of the coupling exponent $kl$.

\subsection{The binary Durfee normal form}

The quotient considered below is the focused arc algebra of the node
$xy=0$, up to the index shift between arcs centered at the origin and
series beginning in degree zero.  Its Gr\"obner degeneration and
two-colored standard monomials are a reindexed form of the node theorem
attributed to Nguyen Duc Tam \cite{Nguyen2015} and used by
Afsharijoo--Mourtada \cite{AfsharijooMourtada2020}.  We include a direct proof adapted to the
iterated normal-form construction required below.

Let
\[
  R_{A,B}=\C[a_0,a_1,\ldots,b_0,b_1,\ldots],
\]
and introduce
\[
  A(t)=\sum_{i\geq0}a_it^i,\qquad
  B(t)=\sum_{j\geq0}b_jt^j.
\]
Write
\[
  A(t)B(t)=\sum_{s\geq0}f_st^s,\qquad
  f_s=\sum_{i+j=s}a_i b_j,
\]
and set $S_{A,B}=\C[f_0,f_1,\ldots]$.  Give the variables the
multidegrees
\[
  \deg a_i=Aq^i,\qquad \deg b_j=Bq^j;
\]
then $\deg f_s=ABq^s$.

Let $\cB(A,B)$ be the span of the monomials
\begin{equation}
\label{eq:binary-standard-monomials}
  a_{i_1+n}\cdots a_{i_m+n}
  b_{j_1}\cdots b_{j_n},
  \qquad
  \begin{matrix}
  0\leq i_1\leq\cdots\leq i_m,\\
  0\leq j_1\leq\cdots\leq j_n.
  \end{matrix}
\end{equation}
The shift of the $a$-indices contributes $q^{mn}$; hence
\[
  \Hilb\cB(A,B)
  =
  \sum_{m,n\geq0}q^{mn}\cP_m\cP_nA^mB^n
  =F(A,B).
\]

\begin{thm}[Binary Durfee normal form]
\label{thm:binary-normal-form}
Multiplication is an isomorphism of multigraded vector spaces
\begin{equation}
\label{eq:binary-normal-form}
  \cB(A,B)\otimes_{\C}S_{A,B}
  \xrightarrow{\ \sim\ }R_{A,B}.
\end{equation}
Thus the monomials \eqref{eq:binary-standard-monomials} give a basis of
$R_{A,B}/(f_0,f_1,\ldots)$.
\end{thm}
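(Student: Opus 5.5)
The plan is to prove surjectivity of the multiplication map \eqref{eq:binary-normal-form} and then deduce injectivity from a multigraded dimension count supplied by Lemma~\ref{lem:F-product}.

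Every multigraded piece of $R_{A,B}$ of degree $A^mB^nq^N$ is finite-dimensional, since only finitely many monomials have $m$ $a$'s, $n$ $b$'s and total index $N$. Hence
\[
\Hilb R_{A,B}=\frac{1}{(A;q)_\infty(B;q)_\infty}.
\]
To avoid assuming in advance that the $f_s$ are algebraically independent, I will replace $S_{A,B}$ by a polynomial ring $\widetilde S=\C[F_0,F_1,\ldots]$ with $\deg F_s=ABq^s$, mapping $F_s\mapsto f_s$. Then $\Hilb\widetilde S=1/(AB;q)_\infty$, and by Lemma~\ref{lem:F-product}
\[
\Hilb\bigl(\cB(A,B)\otimes\widetilde S\bigr)=F(A,B)\cdot\frac{1}{(AB;q)_\infty}=\Hilb R_{A,B}.
\]
So in each finite-dimensional multidegree, a surjective map $\cB\otimes\widetilde S\to R_{A,B}$ is automatically bijective. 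Bijectivity then shows that the $f_s$ are algebraically independent, which identifies $\widetilde S$ with $S_{A,B}$, and it also gives the stated basis of the quotient.

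\textbf{Reduction to the quotient.} Surjectivity reduces to showing that the monomials \eqref{eq:binary-standard-monomials} span $Q=R_{A,B}/(f_0,f_1,\ldots)$. I will argue by graded Nakayama on the multidegree. Suppose $x\in R_{A,B}$ is homogeneous and $\cB$ spans $Q$. Then
\[
x=\beta+\sum_s f_s\,y_s,\qquad \beta\in\cB,
\]
with each $y_s$ homogeneous of multidegree strictly smaller, namely divided by $ABq^s$. By induction on the pair (number of $a$'s, number of $b$'s), each $y_s$ lies in $\cB\cdot S_{A,B}$, and therefore so does $x$.

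\textbf{Spanning the quotient (main obstacle).} This is a straightening argument. A monomial is non-standard exactly when it has $n$ $b$-factors and some $a_i$ with $i<n$. I would first try to show that the ideal $(f_s)$ contains, for each $i$ and each $j_0\le\cdots\le j_i$, an element whose leading term is $a_ib_{j_0}\cdots b_{j_i}$ (one $a$ with index $i$, times $i+1$ $b$'s) in a suitable monomial order. For the order I would take reverse-lexicographic in the $a$-indices, refined by weight, so that in
\[
a_ib_j\equiv-\sum_{i'\neq i}a_{i'}b_{i+j-i'} \pmod{(f_s)}
\]
the replacement terms are smaller.

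The case $i=0$ is just $f_j$, whose leading term I arrange to be $a_0b_j$. For general $i$, I would build the element by eliminating among the $i+1$ relations $b_{j_k}\cdot f_{\bullet}$ (for $k=0,\ldots,i$). This is a determinant or Vandermonde-type combination, in the spirit of the node theorem of \cite{Nguyen2015,AfsharijooMourtada2020}.

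Once these leading terms are available, any monomial containing some $a_i$ together with at least $i+1$ $b$'s can be rewritten modulo $(f_s)$ as a combination of strictly smaller monomials of the same multidegree. Since each multidegree piece is finite, this reduction terminates, and only standard monomials survive. That gives the spanning of $Q$.

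The verification that the chosen monomial order makes these combinations have the claimed leading terms is where I expect the real work to be. Full Gröbner-basis control is not needed: spanning suffices, because the Hilbert series equality already forces independence.
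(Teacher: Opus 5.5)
Your outer architecture is sound, and leaner than the paper's. You replace $S_{A,B}$ by the free algebra $\widetilde S$, count dimensions with Lemma~\ref{lem:F-product}, and reduce surjectivity to spanning of $Q=R_{A,B}/(f_0,f_1,\ldots)$ by induction on multidegree. This makes the paper's regular-sequence computation of $\Hilb(R_{A,B}/I)$, and its separate check of algebraic independence, unnecessary.

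But the whole content of the theorem then sits in the spanning step, which you do not prove, and the mechanism you sketch for it fails. The monomial order is not really specified. Refining by weight is vacuous, because all monomials in a multihomogeneous component have the same $q$-weight. The relation $a_ib_j\equiv-\sum_{i'\neq i}a_{i'}b_{i+j-i'}$ is just $f_{i+j}$, which has one leading term, so its other terms are smaller for only one value of $i$ (namely $i=0$ once $\operatorname{LT}(f_s)=a_0b_s$).

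More seriously, eliminating among the $i+1$ relations $b_{j_k}f_\bullet$ cannot produce the required leading terms. For the target $a_1b_1^2$ both relations are $b_1f_2=a_0b_1b_2+a_1b_1^2+a_2b_0b_1$. Its leading term is $b_1\operatorname{LT}(f_2)=a_0b_1b_2$ in any order with $\operatorname{LT}(f_2)=a_0b_2$. Relations with other multipliers are unavoidable: in the lex order $a_0>a_1>\cdots>b_0>b_1>\cdots$, an element with leading term $a_1b_1^2$ is
\[
  b_1f_2-b_2f_1+b_0f_3-b_3f_0=a_1b_1^2+2a_2b_0b_1+a_3b_0^2 .
\]
The paper obtains these leading terms in that lex order by induction on $i$, starting from $f_{i+j_0}b_{j_1}\cdots b_{j_i}$ and reducing the terms of smaller $a$-index. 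If you follow that route, it needs more care than its one-line description suggests: reducing $a_0b_1b_2$ via $b_2f_1$ creates $-a_1b_0b_2$, which lies above $a_1b_1^2$.

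Your Vandermonde idea does work if you apply it to generating functions, and then no monomial order is needed, which suits your architecture. Fix $n\geq1$, take independent variables $t_1,\ldots,t_n$, and put $\Pi=B(t_1)\cdots B(t_n)$. Since $A(t)B(t)=0$ in $Q[[t]]$, you get $A(t_k)\Pi=0$ in $Q[[t_1,\ldots,t_n]]$ for every $k$. This packages all relations $f_s\cdot(\text{$b$-monomial})$ in the relevant multidegrees, not just $i+1$ of them.

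Divide $t^m$ by the monic polynomial $\prod_k(t-t_k)$. This gives $t^m\equiv\sum_{i<n}r_{m,i}t^i$ with $r_{m,i}\in\C[t_1,\ldots,t_n]$ homogeneous of degree $m-i$. Hence $A(t_k)\Pi=\sum_{i<n}t_k^iw_i$ with $w_i=\bigl(a_i+\sum_{m\geq n}r_{m,i}a_m\bigr)\Pi$. The Vandermonde determinant $\prod_{k<l}(t_k-t_l)$ is a nonzerodivisor on $Q[[t_1,\ldots,t_n]]$ (check it on each homogeneous component $Q\otimes_{\C}\C[t_1,\ldots,t_n]_d$). Multiplying by the adjugate therefore gives $w_i=0$.

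Take the coefficient of $t_1^{k_1}\cdots t_n^{k_n}$. For $i<n$, every $a_ib_{k_1}\cdots b_{k_n}$ is, in $Q$, a combination of monomials $a_mb_{k'_1}\cdots b_{k'_n}$ with $m\geq n$. For $n=2$ this is exactly $a_1b_1^2=-2a_2b_0b_1-a_3b_0^2$ in $Q$.

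Replacing the $a$-factors of index $<n$ one at a time then shows that the monomials \eqref{eq:binary-standard-monomials} span $Q$, and your remaining steps finish the proof. This spanning statement also gives the paper's leading-term containment at once. In its lex order, every nonstandard monomial is larger than every standard one of the same multidegree.
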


\begin{proof}
We give the adapted proof for completeness.  We first compute the
quotient.  For fixed $N$, work in
\[
  R_N=\C[a_0,\ldots,a_N,b_0,\ldots,b_N].
\]
The equations $f_0=\cdots=f_N=0$ say
\[
  A_N(t)B_N(t)\equiv0\pmod {t^{N+1}},
\]
where $A_N$ and $B_N$ are the truncations of $A$ and $B$.  If
$\alpha=\operatorname{ord}A_N$ and
$\beta=\operatorname{ord}B_N$, with order $N+1$ assigned to the zero
polynomial, then $\alpha+\beta\geq N+1$.  The corresponding stratum has
dimension at most
\[
  (N+1-\alpha)+(N+1-\beta)\leq N+1.
\]
The locus $A_N=0$ has dimension $N+1$, so
$(f_0,\ldots,f_N)$ has height $N+1$.  This ideal is generated by
$N+1$ elements in the Cohen--Macaulay ring $R_N$ and has height
$N+1$; it is therefore a complete intersection, and
$f_0,\ldots,f_N$ is a regular sequence.  Adjoining the remaining
variables preserves regularity.  Since this holds for every $N$, the
sequence $f_0,f_1,\ldots$ is regular in $R_{A,B}$.  Also the $f_s$ are algebraically
independent: under the specialization $b_0=1$, $b_j=0$ for $j>0$, one
has $f_s=a_s$.  Therefore, with $I=(f_0,f_1,\ldots)$,
\begin{equation}
\label{eq:Hilb-binary-quotient}
  \Hilb(R_{A,B}/I)
  =
  \frac{(AB;q)_\infty}
       {(A;q)_\infty(B;q)_\infty}
  =F(A,B).
\end{equation}

We next identify a basis of the quotient.  Fix a multihomogeneous
component and choose $N$ large enough that all variables occurring in
that component belong to
$\C[a_0,\ldots,a_N,b_0,\ldots,b_N]$.  On this finite polynomial ring
use the lexicographic monomial order
\[
  a_0>a_1>\cdots>a_N>b_0>b_1>\cdots>b_N.
\]
All comparisons below take place in finite multihomogeneous
components, so these compatible degreewise orders suffice.  Let $J$ be
the monomial ideal generated by
\begin{equation}
\label{eq:binary-forbidden}
  a_i b_{j_0}b_{j_1}\cdots b_{j_i}
\end{equation}
$(i,j_0,\ldots,j_i\geq0)$.  An induction on $i$ shows that every such
monomial is a leading term of an element of $I$, and hence
$J\subseteq\operatorname{in}(I)$.  For $i=0$ this follows from
$f_{j_0}$.  For the induction step, expand
\[
  f_{i+j_0}b_{j_1}\cdots b_{j_i}.
\]
Its $a_i$-term is \eqref{eq:binary-forbidden}.  Every term with
$a$-index smaller than $i$ is divisible by a forbidden monomial already
obtained, and after reducing these terms, the $a_i$-term precedes all
remaining terms.

The monomials outside $J$ are precisely those containing $n$ factors
$b_j$ and only factors $a_i$ with $i\geq n$, namely the monomials
\eqref{eq:binary-standard-monomials}.  Therefore
\[
  \Hilb(R_{A,B}/J)=F(A,B)
  =\Hilb(R_{A,B}/I)
  =\Hilb(R_{A,B}/\operatorname{in}(I)),
\]
where the middle equality is \eqref{eq:Hilb-binary-quotient} and an
initial ideal preserves the multigraded Hilbert series.  Since
$J\subseteq\operatorname{in}(I)$ and every multihomogeneous component is
finite dimensional, equality of the Hilbert series forces
$J=\operatorname{in}(I)$.  Thus the residue classes of
\eqref{eq:binary-standard-monomials} form a basis of $R_{A,B}/I$.

It remains to lift from the quotient.  The basis just obtained shows,
by induction on the total $A,B$-degree, that multiplication in
\eqref{eq:binary-normal-form} is surjective: after subtracting a standard
representative, the remainder is a finite sum $\sum_s f_sh_s$, and each
$h_s$ has smaller total degree.  Finally,
\[
  \Hilb\bigl(\cB(A,B)\otimes S_{A,B}\bigr)
  =
  F(A,B)\frac{1}{(AB;q)_\infty}
  =
  \frac{1}{(A;q)_\infty(B;q)_\infty}
  =
  \Hilb R_{A,B}.
\]
The multihomogeneous components are finite dimensional, so the
surjection is an isomorphism.
\end{proof}

\begin{remark}
The normal form is asymmetric: the $a$-indices are shifted by the number
of $b$-factors.  Reversing the leading-term order shifts the $b$-indices
instead.  These are the two orientations of the same Durfee rectangle.
\end{remark}

\subsection{The pentagon and uncoupled identities}

The two factors in \eqref{eq:binary-normal-form} have Hilbert series
\[
  \Hilb\cB(A,B)
  =
  \sum_{r,s\geq0}q^{rs}\cP_r\cP_sA^rB^s,
  \qquad
  \Hilb S_{A,B}
  =
  \sum_{k\geq0}\cP_k(AB)^k.
\]
The coefficient of $A^aB^b$ in
$\Hilb R_{A,B}=\Hilb\cB(A,B)\Hilb S_{A,B}$ is exactly
\eqref{eq:pentagon}.  Thus Theorem~\ref{thm:binary-normal-form} is a
standard-monomial proof of the pentagon identity.

For four colors, let
\[
  Z_i(t)=\sum_{r\geq0}z_{i,r}t^r,\qquad
  R=\C[z_{i,r}:1\leq i\leq4,\ r\geq0],
  \qquad
  \deg z_{i,r}=x_iq^r.
\]
Write
\[
  Z_i(t)Z_j(t)=\sum_{s\geq0}f_s^{ij}t^s,
  \qquad
  S_{ij}=\C[f_0^{ij},f_1^{ij},\ldots].
\]
Two independent applications of the binary theorem give the horizontal
normal form
\[
  R\cong
  \cB(1,2)\otimes S_{12}\otimes
  \cB(3,4)\otimes S_{34},
\]
and the vertical normal form
\[
  R\cong
  \cB(1,3)\otimes S_{13}\otimes
  \cB(2,4)\otimes S_{24}.
\]
Taking the $\gamma$-graded Hilbert series of these two bases gives,
respectively, the two sides of \eqref{eq:uncoupled}.  Thus the uncoupled
identity compares two standard-monomial bases of the same four-colored
polynomial ring.

\subsection{Keller's identity and the four-colored arc algebra}

Expand
\[
  Z_1(t)Z_2(t)Z_3(t)Z_4(t)
  =
  \sum_{s\geq0}g_st^s
\]
and define the four-colored arc algebra
\begin{equation}
\label{eq:arc-algebra}
  R^{1234}=R/(g_0,g_1,\ldots).
\end{equation}
If
\[
  x_i(t)=\sum_{r\geq1}x_i^{(r)}t^r
\]
denotes an arc centered at the origin and we set
$z_{i,r}=x_i^{(r+1)}$, then $x_i(t)=tZ_i(t)$.  Consequently
\eqref{eq:arc-algebra} is precisely the focused arc algebra at the
origin of the normal-crossing hypersurface
\[
  x_1x_2x_3x_4=0.
\]
Bruschek--Mourtada--Schepers developed this focused arc algebra
framework for normal crossings, including the regular-sequence method
for the coefficient equations and the resulting ordinary
Hilbert--Poincar\'e series \cite{BruschekMourtadaSchepers2013}; see also
Mourtada's survey \cite{Mourtada2023}.  In the present notation their
ordinary series is recovered from the four-color series below by
specializing $x_1=x_2=x_3=x_4=q$:
\[
  \frac{(q^4;q)_\infty}{(q;q)_\infty^4}.
\]
Our purpose is to retain the four separate color degrees and construct
the two iterated binary bases that refine this known series.

Starting with the pairs $(1,2)$ and $(3,4)$ gives
\[
  R\cong
  \cB(1,2)\otimes\cB(3,4)\otimes S_{12}\otimes S_{34}.
\]
Inside the last two factors, the series
$\sum f_s^{12}t^s$ and $\sum f_s^{34}t^s$ have product
$\sum g_st^s$.  A third application of
Theorem~\ref{thm:binary-normal-form} therefore gives
\[
  S_{12}\otimes S_{34}
  \cong
  \cB(12,34)\otimes\C[g_0,g_1,\ldots].
\]
Combining the three multiplication isomorphisms gives a
$\C[g_0,g_1,\ldots]$-linear isomorphism
\[
  \cB(1,2)\otimes\cB(3,4)\otimes\cB(12,34)
  \otimes\C[g_0,g_1,\ldots]
  \xrightarrow{\ \sim\ }R.
\]
Thus $R$ is free as a module over $\C[g_0,g_1,\ldots]$, with the first
three factors as a multigraded vector-space basis.  Tensoring this
decomposition over $\C[g_0,g_1,\ldots]$ with
$\C[g_0,g_1,\ldots]/(g_0,g_1,\ldots)\cong\C$, or equivalently
quotienting by the ideal generated by the $g_s$, gives
\begin{equation}
\label{eq:Keller-H-normal-form}
  R^{1234}\cong
  \cB(1,2)\otimes\cB(3,4)\otimes\cB(12,34).
\end{equation}
Its Hilbert series is
\[
  F(x_1,x_2)F(x_3,x_4)F(x_{12},x_{34}),
\]
and its $\gamma$-graded component is the left-hand side of
\eqref{eq:Keller}.  In particular, the third normal form supplies the
coupling factor $q^{kl}$: it is the Durfee shift for the node formed by
the two intermediate series $Z_1Z_2$ and $Z_3Z_4$.

Starting instead with $(1,3)$ and $(2,4)$ gives
\begin{equation}
\label{eq:Keller-V-normal-form}
  R^{1234}\cong
  \cB(1,3)\otimes\cB(2,4)\otimes\cB(13,24),
\end{equation}
whose $\gamma$-graded Hilbert series is the right-hand side of
\eqref{eq:Keller}.  Keller's identity therefore equates two iterated
Durfee normal-form bases of the same algebra; here the coupling factor
$q^{kl}$ is supplied by the node formed by the intermediate series
$Z_1Z_3$ and $Z_2Z_4$.  The common Hilbert series is
\[
  \Hilb R^{1234}
  =
  \frac{(x_{1234};q)_\infty}
       {\prod_{i=1}^4(x_i;q)_\infty},
\]
the same expression that underlies both the product proof and the common
signed set.

Thus this section does not claim a new construction of the focused arc
algebra, a new regular-sequence computation of its ordinary Hilbert
series, or a new binary node degeneration.  Its contribution is the
four-color refinement, the two iterated decompositions
\eqref{eq:Keller-H-normal-form} and \eqref{eq:Keller-V-normal-form}, and
the identification of their coefficient equality---including the third
node contribution $q^{kl}$---with the $A_2\square A_2$ Keller identity.

\bigskip
\hrule
\bigskip

\noindent During the preparation of this work the author used Generative AI in order to improve language, to check and refine proofs, and debug computer code. After using this tool, the author reviewed and edited the content as needed and takes full responsibility for the content of the publication.

\bigskip
\hrule
\bigskip

\bibliographystyle{amsplain}
\bibliography{KellerA2A2}

\end{document}